\newtheorem{theorem}{Theorem}[section]
\newtheorem{proposition}[theorem]{Proposition}
\newtheorem{corollary}[theorem]{Corollary}
\theoremstyle{definition}
\newtheorem{example}[theorem]{Example}
\theoremstyle{remark}
\newtheorem{remark}[theorem]{Remark}
\numberwithin{equation}{section}
\begin{document}

\title[Orthogonality preserving property for pairs of operators]
{Orthogonality preserving property for pairs of operators on Hilbert $C^*$-modules}

\author[M. Frank, M. S. Moslehian \MakeLowercase{and} A. Zamani]
{Michael Frank$^1$, Mohammad Sal Moslehian$^2$ \MakeLowercase{and} Ali Zamani$^3$}

\address{$^1$ Hochschule f\"ur Technik, Wirtschaft und Kultur (HTWK) Leipzig,
Fakult\"at IM, PF 301166, 04251 Leipzig, Germany}
\email{michael.frank@htwk-leipzig.de}

\address{$^2$ Department of Pure Mathematics, Ferdowsi University of Mashhad,
Center of Excellence in Analysis on Algebraic Structures (CEAAS),
P.O. Box 1159, Mashhad 91775, Iran}
\email{moslehian@um.ac.ir, moslehian@yahoo.com}

\address{$^3$ Department of Mathematics, Farhangian University, Tehran, Iran}
\email{zamani.ali85@yahoo.com}

\subjclass[2010]{46L08, 46L05, 47B49, 39B52.}
\keywords{Orthogonality preserving property; local operator; inner product $C^*$-module; $C^*$-algebra.}

\begin{abstract}
We investigate the orthogonality preserving property for pairs of operators on inner product
$C^*$-modules. Employing the fact that the $C^*$-valued inner product structure of a Hilbert
$C^*$-module is determined essentially by the module structure and by the orthogonality structure,
pairs of linear and local orthogonality-preserving operators are investigated, not a priori bounded.
We obtain that if $\mathscr{A}$ is a $C^{*}$-algebra
and $T, S:\mathscr{E}\to \mathscr{F}$ are two
bounded ${\mathscr A}$-linear operators between full Hilbert
$\mathscr{A}$-modules, then $\langle x, y\rangle = 0$ implies $\langle T(x), S(y)\rangle = 0$
for all $x, y\in \mathscr{E}$ if and only if there exists an element $\gamma$
of the center $Z(M({\mathscr A}))$ of the multiplier algebra $M({\mathscr A})$
of ${\mathscr A}$ such that $\langle T(x), S(y)\rangle = \gamma \langle x, y\rangle$
for all $x, y\in \mathscr{E}$. Varying the conditions on the operators $T$ and $S$ we obtain
further affirmative results for local operators and for pairs of a bounded and an unbounded
${\mathscr A}$-linear operator with bounded inverse.
\end{abstract} \maketitle
{\section{Introduction}
The starting point of considerations about orthogonality-preserving operators
on Hilbert spaces was Wigner's theorem \cite{W} with its first complete
proof by Uhlhorn \cite[Lemma 3.4, Theorems 4.1 and 4.2]{U}:
For two complex Hilbert spaces $(\mathscr{H},[\cdot,\cdot])$
and $(\mathscr{K},[\cdot,\cdot])$ with $\dim(\mathscr{H}) \geq 3$ and for a bijective operator
$T: \mathscr{H} \to \mathscr{K}$ with the property that $[T(x), T(y)] = 0$ if and only if $[x, y]=0$,
there exist a bijective isometry $U:\mathscr{H} \to \mathscr{K}$ and a scalar-valued function
$\phi:\mathscr{H} \to {\mathbb C}$} of modulus one such that $T(x)= \phi(x) U(x)$
for each $x \in \mathscr{H}$; see \cite{LT, MOL, RS} and the references therein.
Uhlhorn gave a counterexample for dimension $2$, and he found a similar statement
for two-dimensional Hilbert spaces under additional assumptions;
see \cite[Theorem 5.1]{U}. For a historical account on further variations
and extensions we refer the readers to the survey by Chevalier \cite{C}.
The situation of two unknown bijective operators $T,S: \mathscr{H} \to \mathscr{H}$ on
a given Hilbert space $\mathscr{H}$ of
dimension at least $3$ was treated by Moln\'{a}r \cite[Theorem 1]{Mol2000}:
In the case when $[T(x), S(y)] = [x, y]$ for each $x,y \in \mathscr{H}$
there are bounded invertible either both linear or both conjugate-linear operators
$U,V: \mathscr{H} \to \mathscr{H}$ such that $V={U^*}^{-1}$, $T=U$, and $S=V$.
Varying the conditions on $S$ and $T$, Chmieli\'{n}ski \cite{Ch.new} obtained a
number of exceptional vs. further affirmative results.

In \cite[Theorem 4]{Lu.Wo.1} {\L}ukasik and W\'ojcik were able to classify the Hilbert space situations in which $[T(x),S(y)]=[x,y]$ for
every $x, y\in \mathscr{H}$ and some functions $T,S: \mathscr{H}\to \mathscr{K}$: This takes place if and only if
there exist three suitable closed subspaces $\mathscr{M}_1, \mathscr{M}_2, \mathscr{M}_3 \subseteq \mathscr{K}$ such that
$\mathscr{K} = \mathscr{M}_1 + \mathscr{M}_2 + \mathscr{M}_3$ with $\mathscr{M}_j \bot \mathscr{M}_k$ for $j\not= k$,
and $T, S$ can be written as the following decompositions
\begin{align*}
T=A+ \phi, \qquad S=(A^*)^{-1} +\psi
\end{align*}
for an invertible operator $A: \mathscr{H}\to \mathscr{M}_1$ and for some operators
$\phi: \mathscr{H} \to \mathscr{M}_2$ and $\psi: \mathscr{H} \to \mathscr{M}_3$.
See \cite{Ch.new, Lu, Lu.Wo.2} for more results.

A first generalization of Wigner's theorem to Hilbert $C^*$-modules over standard $C^*$-algebras
was found by Ili{\v{s}}evi\'c and Turn{\v{s}}ek
\cite[Theorem 3.1]{I.T}. They also considered the case of approximate orthogonality-preservation.
In parallel, a generalization of Wigner's theorem to (full) Hilbert $\mathscr A$-modules $(\mathscr E, \langle \cdot,\cdot \rangle)$
was found by Frank et al. \cite[Theorem 4]{F.M.P} and by Leung et al.
\cite[Theorem 3.2, Corollary 3.3, and Theorem 3.4]{L.N.W.1} characterizing bounded/unbounded ${\mathscr A}$-linear operators $T: {\mathscr E} \to {\mathscr E}$ with the property that $\langle T(x),T(y) \rangle = 0$ whenever
$\langle x,y \rangle=0$ for $x,y \in {\mathscr E}$, by the equality $\langle T(x),T(y) \rangle = u \langle
x,y \rangle$ for a so-called $T$-specific positive central element $u$ of the multiplier algebra
of the $C^*$-algebra of coefficients and for all $x,y \in \mathscr E$.
The operator $T$ turns out to be bounded, and hence continuous.
Further, Leung et al. \cite{L.N.W.1} gave a characterization of such operators $T$ as
$T(x)=W(wx)=wW(x),\,\, (x \in \mathscr E)$, for a $T$-specific positive central element $w$ of the multiplier algebra
of the $C^*$-algebra of coefficients and for a (not necessarily bijective) Hilbert $C^*$-module isomorphism
$W: \overline{w{\mathscr E}} \to \overline{T({\mathscr E})}$. So, the orthogonality structure and
the $C^*$-module structure of a Hilbert $C^*$-module determine the Hilbert $C^*$-module,
without further topological characterizations beyond the definition.
Also, non-trivial orthogonality-preserving ${\mathscr A}$-linear operators on Hilbert $\mathscr A$-modules
with an injective $T$-specific positive central element $w$
(considered as a multiplication on the multiplier algebra) are always injective,
and hence, strongly orthogonality-preserving. Further conditions equivalent
to the orthogonality-preserving property of (bounded) ${\mathscr A}$-linear operators on Hilbert $\mathscr A$-modules
have been investigated by several authors, cf. \cite{A.R, A.R.2, I.T.Y, L.N.W.2, M.Z} and others.

In the present paper, we investigate several conditions on (not necessarily bijective)
${\mathscr A}$-linear operators $T$ and $S$ acting on Hilbert $\mathscr A$-modules and preserving
 the orthogonality of elements as a pair in one direction, that is, we do not require the bijectivity
of the operators, in general.
Also, we change orthogonality-preservation to approximate orthogonality preservation in some
situations, or we consider merely local operators. The exceptional cases for Hilbert space situations
indicate more complicated situations to appear for the more general Hilbert $C^*$-module settings.
We give some examples.
Our focus is on affirmative results of wide generality which can be obtained and on some proving
techniques to get more information on the background of the phenomena.
\section{Preliminaries}
Let $\big(\mathscr{H}, [\cdot, \cdot]\big)$ be an inner product space.
Recall that vectors $\eta, \zeta \in \mathscr{H}$ are said to be orthogonal,
written as $\eta\perp \zeta$, if $[\eta, \zeta] = 0$.
For inner product spaces $\mathscr{H}, \mathscr{K}$ and two functions
$T, S: \mathscr{H}\to \mathscr{K}$, the orthogonality preserving property
$$\eta\perp \zeta \Longrightarrow T(\eta)\perp S(\zeta) \qquad (\eta, \zeta\in \mathscr{H})$$
was introduced in \cite{Ch.new}. The following characterization was proved.
\begin{theorem}\cite[Theorem 3.9]{Ch.new}\label{th.1.1}
Let $\mathscr{H}$ and $\mathscr{K}$ be inner product spaces, and let $T, S: \mathscr{H}
\to \mathscr{K}$ be linear operators. The following conditions are equivalent:
\begin{itemize}
\item[(i)] $\eta\perp \zeta\, \Longrightarrow \,T(\eta)\perp S(\zeta)$ for all $\eta, \zeta \in\mathscr{H}$.
\item[(ii)] There exists $\gamma\in \mathbb{C}$ such that $[T(\eta), S(\zeta)]
= \gamma [\eta, \zeta]$ for all $\eta, \zeta \in\mathscr{H}$.
\end{itemize}
\end{theorem}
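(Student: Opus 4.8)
The implication $(ii)\Rightarrow(i)$ is immediate, since $[\eta,\zeta]=0$ forces $[T(\eta),S(\zeta)]=\gamma\cdot 0=0$; so the entire content lies in $(i)\Rightarrow(ii)$, and (if $\mathscr{H}\ne\{0\}$, the case $\mathscr{H}=\{0\}$ being vacuous) the plan for it is a Gram--Schmidt/polarization-type argument. Throughout I write $[\cdot,\cdot]$ for either inner product and take it linear in the first and conjugate-linear in the second slot (the real case is identical). Assume $(i)$, fix a unit vector $x\in\mathscr{H}$, and set $\gamma_x:=[T(x),S(x)]$. For an arbitrary $y\in\mathscr{H}$ the vector $w:=y-[y,x]x$ satisfies $w\perp x$, and by Hermitian symmetry of the inner product also $x\perp w$. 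Applying hypothesis $(i)$ to the orthogonal pairs $(w,x)$ and $(x,w)$ and expanding by linearity of $T$, $S$ and sesquilinearity of $[\cdot,\cdot]$, one obtains
\[
[T(y),S(x)]=[y,x]\,\gamma_x \qquad\text{and}\qquad [T(x),S(y)]=[x,y]\,\gamma_x
\]
for every unit vector $x$ and every $y\in\mathscr{H}$.

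The heart of the matter is to show that $\gamma_x$ does not depend on the chosen unit vector $x$. Given unit vectors $x,x'$ with $[x,x']\ne 0$: the first identity applied with unit vector $x$ and $y=x'$, and the second identity applied with unit vector $x'$ and $y=x$, both evaluate the quantity $[T(x'),S(x)]$, giving $[x',x]\,\gamma_x=[x',x]\,\gamma_{x'}$ and hence $\gamma_x=\gamma_{x'}$. If instead $[x,x']=0$, then $x$ and $x'$ are linearly independent, $z:=\tfrac{1}{\sqrt 2}(x+x')$ is a unit vector, and $[z,x]=[z,x']=\tfrac{1}{\sqrt 2}\ne 0$, so the previous case yields $\gamma_x=\gamma_z=\gamma_{x'}$. (If no pair of orthogonal unit vectors exists, e.g.\ $\dim\mathscr{H}\le 1$, this second subcase never arises.) Denote the common value by $\gamma$.

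Finally, take arbitrary $x,y\in\mathscr{H}$. If $x=0$ then $T(x)=0$ and both sides of $[T(x),S(y)]=\gamma[x,y]$ vanish; otherwise write $x=\|x\|\,\widehat{x}$ with $\widehat{x}$ a unit vector and invoke the second identity with $\widehat{x}$ in place of $x$: since $\|x\|$ is real, homogeneity of $T$ and of $[\cdot,\cdot]$ in the first slot gives $[T(x),S(y)]=\|x\|\,[T(\widehat{x}),S(y)]=\|x\|\,[\widehat{x},y]\,\gamma=[x,y]\,\gamma$, which is exactly $(ii)$. The only genuine obstacle is the constancy of $\gamma_x$ when two unit vectors happen to be mutually orthogonal, and this is precisely what the connecting vector $z$ resolves; everything else is bookkeeping with linearity and the sesquilinear identities.
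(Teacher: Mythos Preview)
Your proof is correct. Note, however, that the paper does not give its own proof of this statement: it is quoted verbatim as Theorem~3.9 of Chmieli\'nski's paper \cite{Ch.new} and used as a black box (most prominently inside the proof of Theorem~\ref{th.212.7}). So there is no in-paper argument to compare yours against.

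That said, your approach is essentially the classical one and matches in spirit the Gram--Schmidt decomposition the present paper later employs in the $C^*$-module setting (see the proof of Theorem~\ref{th.213.70}, where the authors use $x-\langle x,z\rangle z\perp z$ in exactly the same way). The two displayed identities $[T(y),S(x)]=[y,x]\gamma_x$ and $[T(x),S(y)]=[x,y]\gamma_x$ follow cleanly from applying the hypothesis to $w=y-[y,x]x$ in each slot, and your two-case argument for the constancy of $\gamma_x$ (non-orthogonal pair directly; orthogonal pair via the connecting unit vector $z=(x+x')/\sqrt{2}$) is the standard and efficient route. One small cosmetic point: in the final scaling step you could equally well invoke the \emph{first} identity and normalize $y$ instead of $x$; either works, and nothing is missing.
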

Some results in \cite{Ch.new} have been generalized in various ways by {\L}ukasik and W\'ojcik
in \cite{Lu.Wo.1, Lu.Wo.2}. Other related topics can be found in \cite{Lu, M.M.S}.

Notice that orthogonality preserving functions may be nonlinear and discontinuous,
i.e. far from linear; see \cite[Example 2]{J.C.4}. The theorem describes e.g. two situations:
Either the operators $S$ and $T$ may have orthogonal ranges and so $\gamma =0$
(and they may have non-trivial kernels), or in other situations ${\rm ker}(S)={\rm ker}(T)= \{ 0 \}$
for both kernels and ${\rm ran}(S)^{\bot\bot} ={\rm ran}(T)^{\bot\bot}$
for the ranges of both $S$ and $T$. We will see later that
in fact  ${\rm ran}(S)={\rm ran}(T)$, by Corollary \ref{Cor_2.10}.
However, for Hilbert $C^*$-modules over $C^*$-algebras of coefficients with non-trivial centers
the latter assumption may fail, cf. Example \ref{Ex_extra} below.

For a given $\theta\in[0, 1)$ two vectors $\eta, \zeta \in
\mathscr{H}$ are approximately orthogonal or $\theta$-orthogonal, denoted by
$\eta\perp^{\theta} \zeta$, if $\big|[\eta, \zeta]\big|\leq \theta\|\eta\|\,\|\zeta\|$.
Two operators $S, T: \mathscr{H} \to \mathscr{K}$
are approximately orthogonality preserving operators if for given $\delta, \varepsilon \in [0,1)$ one has
$$\eta\perp^{\delta} \zeta \Longrightarrow T(\eta)\perp^{\varepsilon} S(\zeta) \qquad (\eta, \zeta\in{\mathscr{H}}).$$
Often $\delta =0$ has been considered. The approximate orthogonality preserving
operators and the orthogonality equations have been investigated recently in
\cite{M.S.P, P.S.M.M, P.W, W0, Z.M.F}. Chmieli\'{n}ski \cite{J.C.4} and Turn\v{s}ek \cite{A.T.1}
studied the approximate orthogonality preserving property for one linear operator with $\delta=0$.
In addition, Chmieli\'{n}ski et al. \cite{C.L.W} verified the approximate orthogonality preserving property for two linear operators.

An inner product module over a $C^{*}$-algebra $\mathscr{A}$ is a (left)
$\mathscr{A}$-module $\mathscr{E}$ equipped with an $\mathscr{A}$-valued
inner product $\langle\cdot, \cdot\rangle$, which is linear and
$\mathscr{A}$-linear in the first variable and has the properties $\langle x, y
\rangle^*=\langle y, x\rangle$ as well as $\langle x, x\rangle \geq 0$ with equality
if and only if $x = 0$. The space $\mathscr{E}$ is called a Hilbert $\mathscr{A}$-module
if it is complete with respect to the norm $\|x\| = {\|\langle x, x\rangle\|}^{\frac{1}{2}}$.
An inner product $\mathscr{A}$-module $\mathscr{E}$ has an ``$\mathscr{A}$-valued
norm" $|\cdot|$, defined by $|x|=\langle x, x\rangle^{\frac{1}{2}}$.
By $\langle \mathscr{E}, \mathscr{E}\rangle$ we denote the closure of the span of
$\{\langle x, y\rangle: x, y \in \mathscr{E}\}$. We say that a Hilbert $\mathscr{A}$-module
$\mathscr{E}$ is full if $\langle \mathscr{E}, \mathscr{E}\rangle = \mathscr{A}$.
An isometry between inner product $\mathscr{A}$-modules $\mathscr{E}$ and $\mathscr{F}$
is an operator $U:\mathscr{E}\to \mathscr{F}$ which preserves inner products,
i.e. $\langle Ux, Uy\rangle = \langle x, y\rangle$ for all $x, y \in \mathscr{E}$.
An operator $T:\mathscr{E}\to \mathscr{F}$ is called $\mathscr{A}$-linear if it is linear
and $T(ax) = aT(x)$ for all $x\in \mathscr{E}$, $a\in\mathscr{A}$.
Further, $T$ is called local if it is linear and
$$ax = 0\, \Longrightarrow \,aT(x)= 0 \qquad (a\in\mathscr{A}, x\in \mathscr{E}).$$
Examples of local operators include multiplication and differential operators.
Note, that every $\mathscr{A}$-linear operator is local, but the converse is not true,
in general (take linear differential operators into account).
However, every bounded local operator between inner product modules is
$\mathscr{A}$-linear; see \cite{L.N.W.2}.

An operator $T:\mathscr{E}\to \mathscr{F}$ between Hilbert $\mathscr{A}$-modules $\mathscr{E}$ and $\mathscr{F}$
is called adjointable if there exists an operator $T^*:\mathscr{F}\to \mathscr{E}$
such that $\langle Tx, y\rangle = \langle x, T^*y\rangle$ for all $x\in \mathscr{E}$ and $y\in\mathscr{F}$.
It is easy to see that every adjointable operator $T$ is a bounded $\mathscr{A}$-linear operator; see \cite{Man}.

Although inner product $C^{*}$-modules generalize inner product spaces by allowing
inner products to take values in a certain $C^{*}$-algebra instead of the
$C^{*}$-algebra of complex numbers, some fundamental properties of inner product
spaces are no longer valid in inner product $C^{*}$-modules in their full generality.
For instance, they may not possess orthonormal bases or even (normalized tight) frames,
cf. \cite{Li}, and norm-closed or even orthogonally closed submodules may not be
orthogonal summands, cf. \cite{Man}.
Therefore, when we are studying inner product $C^{*}$-modules, it is always of
interest under which conditions the results analogous to those for inner product spaces
can be reobtained, as well as which more general situations might appear.
We refer the reader to \cite{Man} for more information on the basic theory of Hilbert
$C^{*}$-modules.

It is natural to explore the (approximate) orthogonality preserving property bet\-ween
inner product $C^{*}$-modules. Elements $x$ and $y$ in an inner product $C^{*}$-module
$\mathscr{E}$ are said to be orthogonal, written as
$x\perp y$, if $\langle x, y\rangle = 0$. Analogously to the Hilbert space situation,
for a given $\theta\in[0 , 1)$ two elements $x, y \in\mathscr{E}$ are approximately orthogonal or $\theta$-orthogonal,
denoted by $x\perp^{\theta} y$, if $\|\langle x, y\rangle \|
\leq \theta\|x\|\|y\|$. An operator $T: \mathscr{E} \to \mathscr{F}$ between
inner product $C^{*}$-modules is approximately orthogonality preserving if for given
$\delta, \varepsilon\in [0,1)$ one has
$$x\perp^{\delta} y \Longrightarrow T(x)\perp^{\varepsilon} T(y) \qquad (x, y\in{\mathscr{E}}).$$
This definition was introduced and investigated in \cite{I.T, M.Z}.

Two natural problems are to describe such a class of approximately orthogonality
preserving operators and to determine the stability of the orthogonality preserving property.
Let $\mathbb{K}(\mathscr{H})$ and $\mathbb{B}(\mathscr{H})$ be the
$C^{*}$-algebras of all compact linear operators and of all bounded linear operators on a
Hilbert space $\mathscr{H}$, respectively.
Recall that $\mathscr{A}$ is a standard $C^{*}$-algebra on a Hilbert space
$\mathscr{H}$ if $\mathbb{K}(\mathscr{H})\subseteq \mathscr{A} \subseteq
\mathbb{B}(\mathscr{H})$.

In the case when $\mathscr{A}$ is a standard $C^{*}$-algebra and $\delta=0$,
Ili\v{s}evi\'{c} and Turn{\v{s}}ek \cite{I.T} studied the approximate orthogonality
preserving property on $\mathscr{A}$-modules.
In \cite{M.Z}, the authors gave some sufficient conditions for a linear operator between
Hilbert $C^*$-modules to be approximately orthogonality preserving. Moreover, it was
obtained in \cite[Theorem 3.9]{M.Z}, that if $\mathscr{A}$ is a standard $C^{*}$-algebra and
$T: \mathscr{E} \to \mathscr{F}$ is a nonzero $\mathscr{A}$-linear
$(\delta, \varepsilon)$-orthogonality preserving operator between $\mathscr{A}$-modules,
then
$$\big\|\langle T(x), T(y)\rangle - \|T\|^2\langle x, y\rangle\big\|\leq \frac{4(\varepsilon -
\delta)}{(1 - \delta)(1 + \varepsilon)} \|T(x)\|\,\|T(y)\|\qquad (x, y\in \mathscr{E}).$$
Now, we will concentrate our investigations on the following condition,
$$x\perp y \Longrightarrow T(x)\perp S(y) \qquad (x, y\in\mathscr{E}),$$
which we call the orthogonality preserving property for two linear operators
$T, S: \mathscr{E}\to \mathscr{F}$.

In the case when $S = T$, the orthogonality preserving property has been treated
by Frank et al. \cite{F.M.P}, by Leung et al. \cite{L.N.W.1}, and others.

In the present paper, we show (Theorem \ref{cr.212.9}) that if $\mathscr{A}$ is a standard $C^{*}$-algebra
and $T, S:\mathscr{E}\to \mathscr{F}$ are two nonzero local operators
between inner product $\mathscr{A}$-modules, then $x \perp y\, \Longrightarrow \,
T(x) \perp S(y)$ for all $x, y\in \mathscr{E}$ if and only if there exists $\gamma\in
\mathbb{C}$ such that $\langle T(x), S(y)\rangle = \gamma \langle x, y\rangle$ for all
$x, y\in \mathscr{E}$. In particular, $T$ and $S$ are $\mathscr{A}$-linear. In fact, this result can be
considered as a generalization of Theorem \ref{th.1.1}.
We then apply it in Theorem \ref{Thm_3.1} to prove that if ${\mathscr A}$ is a $C^*$-algebra and
$T, S : {\mathscr E} \to {\mathscr F}$ are two nonzero
bounded ${\mathscr A}$-linear operators between
full Hilbert ${\mathscr A}$-modules such that
$x \perp y\, \Longrightarrow \,T(x) \perp S(y)$ for all $x, y\in \mathscr{E}$,
then there exists an element $\gamma$ of the center $Z(M({\mathscr A}))$ of the multiplier
algebra $M({\mathscr A})$ of ${\mathscr A}$ such that $\langle T(x), S(y)\rangle =
\gamma \langle x, y\rangle$ for all $x, y\in \mathscr{E}$.
In the case of pairs of merely bounded linear operators $S$ and $T$,
the invertibility of $S$ implies the ${\mathscr A}$-linearity and adjointability of these operators, and $T=(S^*)^{-1}$.

\section{Linear and local orthogonality-preserving operators}
The aim of this section is to prove an analogue of Theorem \ref{th.1.1} for
two unknown linear operators in inner product $C^*$-modules, and subsequently,
a generalization of Theorem \ref{th.1.1} for $\mathscr{A}$-linear
operators between Hilbert $\mathscr{A}$-modules.
Let us start with some observations. The following result is a consequence of
\cite[Theorem 3.1]{A.R} and \cite[Lemma 4.1]{Z.M.F}.

\begin{proposition}\label{pr.212.5}
Let $\mathscr{E}$ and $\mathscr{F}$ be two inner product $\mathscr{A}$-modules
and $x, y\in \mathscr{E}$. Let $T, S:\mathscr{E}\to \mathscr{F}$ be
two nonzero operators. The following statements are mutually equivalent:
\begin{itemize}
\item[(i)] $x \perp y\, \Longrightarrow \,T(x) \perp S(y)$.
\item[(ii)] $|x - \lambda y| = |x + \lambda y|\, \Longrightarrow \,|T(x) - \lambda S(y)|
= |T(x) + \lambda S(y)|$ for all $\lambda\in \mathbb{C}$.
\item[(iii)] $|x - ay| = |x + ay|\, \Longrightarrow \,|T(x) - aS(y)| = |T(x) + aS(y)|$ for
all $a\in \mathscr{A}$.
\item[(iv)] $|x|^2 \leq |x + \lambda y|^2\, \Longrightarrow \,|T(x)|^2 \leq |T(x) +
\lambda S(y)|^2$ for all $\lambda\in \mathbb{C}$.
\item[(v)] $|x|^2 \leq |x + ay|^2\, \Longrightarrow \,|T(x)|^2 \leq |T(x) + aS(y)|^2$
for all $a\in \mathscr{A}$.
\item[(vi)] $|x| \leq |x + ay|\, \Longrightarrow \,|T(x)| \leq |T(x) + aS(y)|$ for all $a\in \mathscr{A}$.
\end{itemize}
\end{proposition}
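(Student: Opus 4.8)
The plan is to reduce everything, via Lemma~\ref{lm.211}, to the single implication in (i), applying that lemma twice: once in $\mathscr{E}$ to the pair $x,y$, and once in $\mathscr{F}$ to the pair $T(x),S(y)$. So I would fix $x,y\in\mathscr{E}$ and observe that each of (ii)--(vi) is an implication ``$P\Rightarrow Q$'' in which $P$ is one of the items (ii)--(vi) of Lemma~\ref{lm.211} written for the module $\mathscr{E}$ and the pair $x,y$, hence equivalent to $x\perp y$, while $Q$ is the corresponding item of Lemma~\ref{lm.211} written for the module $\mathscr{F}$ and the pair $T(x),S(y)$, hence equivalent to $T(x)\perp S(y)$. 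Therefore each of (ii)--(vi) is equivalent to ``$x\perp y\Rightarrow T(x)\perp S(y)$'', which is precisely (i).

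Concretely, for the equivalence (i)$\Leftrightarrow$(ii) I would argue: by the equivalence (i)$\Leftrightarrow$(ii) in Lemma~\ref{lm.211}, the statement ``$|x-\lambda y|=|x+\lambda y|$ for all $\lambda\in\mathbb{C}$'' is the same as $x\perp y$; applying the same equivalence in $\mathscr{F}$ to $T(x)$ and $S(y)$, the statement ``$|T(x)-\lambda S(y)|=|T(x)+\lambda S(y)|$ for all $\lambda\in\mathbb{C}$'' is the same as $T(x)\perp S(y)$; so (ii) says exactly ``$x\perp y\Rightarrow T(x)\perp S(y)$'', i.e.\ (i). The remaining equivalences (i)$\Leftrightarrow$(iii), (i)$\Leftrightarrow$(iv), (i)$\Leftrightarrow$(v), (i)$\Leftrightarrow$(vi) are obtained in exactly the same way, invoking respectively items (iii), (iv), (v), (vi) of Lemma~\ref{lm.211} --- first in $\mathscr{E}$ to rewrite the hypothesis as $x\perp y$, then in $\mathscr{F}$ to rewrite the conclusion as $T(x)\perp S(y)$.

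I do not expect any real obstacle: the proposition is a direct translation of Lemma~\ref{lm.211}. The one point worth a line of care is that Lemma~\ref{lm.211} is valid for an arbitrary pair of elements, with no nonzero hypothesis, so it may be applied verbatim to the possibly-zero elements $T(x),S(y)\in\mathscr{F}$ --- and if $T(x)=0$ or $S(y)=0$ then both the relevant conclusion and the relation $T(x)\perp S(y)$ hold trivially, so nothing breaks. (In fact the nonzero assumption on $T$ and $S$ is not used here; it is kept only for consistency with the results that follow.)
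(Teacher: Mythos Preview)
Your proposal is correct and is exactly the argument the paper intends: the authors do not write out a proof at all, stating only that the proposition follows ``as a consequence of Lemma~\ref{lm.211}'', and your two applications of that lemma (once in $\mathscr{E}$ to the pair $x,y$, once in $\mathscr{F}$ to the pair $T(x),S(y)$) are precisely the intended unpacking. Your remark that the nonzero hypothesis on $T,S$ plays no role here is also accurate.
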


\begin{remark}
For inner product $\mathscr{A}$-modules $\mathscr{E}$ and $\mathscr{F}$
and nonzero operators $T, S:\mathscr{E}\to \mathscr{F}$, we do not know whether
the following statements for $x, y\in \mathscr{E}$ are mutually equivalent:
\begin{itemize}
\item[(i)] $x \perp y\, \Longrightarrow \,T(x) \perp S(y)$.
\item[(ii)] $|x| \leq |x + \lambda y|\, \Longrightarrow \,|T(x)| \leq |T(x) +
\lambda S(y)|$ for all $\lambda\in \mathbb{C}$.
\end{itemize}
Now, we consider the $C^*$-algebra $\mathbb{M}_2(\mathbb{C})$ of all
complex $2\times 2$ matrices, as an inner product
$C^*$-module over itself. Let $A, B\in \mathbb{M}_2(\mathbb{C})$ and let
$T, S:\mathbb{M}_2(\mathbb{C})\to \mathbb{M}_2(\mathbb{C})$
be two nonzero operators. Then, by \cite[Proposition 3.6]{A.R}, the following
statements are mutually equivalent:
\begin{itemize}
\item[(i)] $A \perp B\, \Longrightarrow \,T(A) \perp S(B)$.
\item[(ii)] $|A| \leq |A + \lambda B|\, \Longrightarrow \,|T(A)| \leq |T(A) + \lambda S(B)|$
for all $\lambda\in \mathbb{C}$.
\end{itemize}
\end{remark}

Employing the polarization identity, we obtain the next result.
\begin{proposition}\label{pr.212.6}
Let $\mathscr{E}$ and $\mathscr{F}$ be two inner product $\mathscr{A}$-modules.
Let $T, S:\mathscr{E}\to \mathscr{F}$ be two nonzero linear operators
such that $\langle T(x), S(x)\rangle = \gamma |x|^2$ for all $x\in \mathscr{E}$ and
for some $\gamma\in Z(M(\mathscr{A}))$. Then
$$x \perp y\, \Longrightarrow \,T(x) \perp S(y) \qquad (x, y\in \mathscr{E}).$$
\end{proposition}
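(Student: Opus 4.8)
The plan is to derive the orthogonality-preserving property from the stated hypothesis $\langle T(x),S(x)\rangle=\gamma|x|^2$ by a polarization argument, exploiting that $\gamma$ lies in the center of the multiplier algebra. First I would write down the polarization identity for the sesquilinear form $(x,y)\mapsto\langle T(x),S(y)\rangle$, which is legitimate because $T$ and $S$ are linear: for all $x,y\in\mathscr{E}$,
\begin{equation}\label{eq:pol}
4\langle T(x),S(y)\rangle=\sum_{k=0}^{3} i^{k}\,\langle T(x+i^{k}y),S(x+i^{k}y)\rangle.
\end{equation}
Applying the hypothesis to each term $x+i^{k}y$ turns the right-hand side into $\sum_{k=0}^{3} i^{k}\gamma\,|x+i^{k}y|^2=\gamma\sum_{k=0}^{3} i^{k}\langle x+i^{k}y,x+i^{k}y\rangle$, and the scalar $i^{k}$ commutes with $\gamma$ (it is central as a multiple of the unit), so this equals $4\gamma\langle x,y\rangle$ by the ordinary polarization identity in $\mathscr{E}$. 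Hence
\begin{equation}\label{eq:gammaform}
\langle T(x),S(y)\rangle=\gamma\langle x,y\rangle\qquad(x,y\in\mathscr{E}).
\end{equation}

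From \eqref{eq:gammaform} the conclusion is immediate: if $x\perp y$, i.e. $\langle x,y\rangle=0$, then $\langle T(x),S(y)\rangle=\gamma\cdot 0=0$, so $T(x)\perp S(y)$. This is exactly the desired implication.

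One subtlety I would flag and address carefully: in \eqref{eq:pol} the terms $\langle T(x+i^{k}y),S(x+i^{k}y)\rangle$ are elements of $\mathscr{A}$ (or $M(\mathscr{A})$), and when I pull $\gamma$ out of the sum I am using that $\gamma$ commutes with the complex scalars $i^{k}$ — which is trivial — rather than with arbitrary elements of $\mathscr{A}$; so centrality of $\gamma$ in $Z(M(\mathscr{A}))$ is not even strictly needed for this particular implication, only the fact that $\gamma$ is a well-defined left multiplier acting on $\langle\mathscr{E},\mathscr{E}\rangle$ and that $\gamma|x|^2=\gamma\langle x,x\rangle$ makes sense. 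The centrality will matter in the converse direction and in the later global results, but here it is harmless to state the hypothesis with $\gamma\in Z(M(\mathscr{A}))$. The main (and only real) obstacle is therefore bookkeeping: making sure the $\mathscr{A}$-valued polarization identity is applied with the correct placement of the scalars and that the expression $\gamma|x|^2$ is interpreted as the action of the multiplier $\gamma$ on $\langle x,x\rangle\in\mathscr{A}$, which then extends by linearity and continuity to all of $\langle\mathscr{E},\mathscr{E}\rangle$. Once \eqref{eq:gammaform} is in hand the proof is one line.
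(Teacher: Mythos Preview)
Your proof is correct and follows exactly the approach the paper indicates: the paper simply states ``Employing the polarization identity, we get the next result'' without further details, and your argument is precisely the polarization computation that justifies this. Your additional remark that centrality of $\gamma$ is not actually needed for this direction is a valid observation, though not required for the proposition as stated.
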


Notice that the converse of the above proposition is not true, even in the case $T = S$;
see \cite[Example 3.14]{M.Z}. In the next theorem, we prove that the converse of
the above proposition is true if $\mathscr{A}$ is a standard $C^{*}$-algebra,
in particular, whenever $Z(M({\mathscr{A}}))=\mathbb{C}$.

To achieve the next theorem we state some prerequisites.
Given two vectors $\eta$  and $\zeta$ in a Hilbert space ${\mathscr{H}}$, we shall
denote the one-rank operator defined by $(\eta\otimes \zeta)(\xi) = [\xi, \zeta]\eta$
 by $\eta\otimes \zeta \in \mathbb{K}(\mathscr{H})$.
Observe that $\eta\otimes \eta$ is a minimal projection. Recall that a projection
$e$ in a $C^*$-algebra $\mathscr{A}$ is called minimal if $e \mathscr{A}e =\mathbb{C}e$.
Now let $\mathscr{A}$ be a standard $C^{*}$-algebra on a Hilbert space $\mathscr{H}$
and let $\mathscr{E}$ be an inner product (respectively, Hilbert) $\mathscr{A}$-module.
Let $e = \eta\otimes \eta$ for some unit vector $\eta\in {\mathscr{H}}$ be a minimal
projection. Then $\mathscr{E}_e = \{ex: \, x\in\mathscr{E}\}$ is a complex inner product
(respectively, Hilbert) space contained in $\mathscr{E}$ with respect to the inner product
$[x, y] = {\rm tr}(\langle x, y\rangle)$, $x, y\in {\mathscr{E}}_e$; see \cite{B.G}. Note that if $x, y\in
\mathscr{E}_e$, then $\langle x, y\rangle = [x, y]e$ and ${\|x\|}_{\mathscr{E}_e} =
{\|x\|}_{\mathscr{E}}$, where the norm ${\|.\|}_{\mathscr{E}_e}$ comes from the inner
product $[\cdot,\cdot]$. This enables us to apply Hilbert space theory by lifting results from the
Hilbert space $\mathscr{E}_e$ to the whole $\mathscr{A}$-module $\mathscr{E}$.

\begin{theorem}\label{th.212.7}
Let $\mathscr{A}$ be a standard $C^{*}$-algebra on a Hilbert space $\mathscr{H}$
and let $\mathscr{E}$ and $\mathscr{F}$ be two inner product $\mathscr{A}$-modules.
Suppose, $T, S:\mathscr{E}\to \mathscr{F}$ are two nonzero $\mathscr{A}$-linear operators such that
$$x \perp y\, \Longrightarrow \,T(x) \perp S(y) \qquad (x, y\in \mathscr{E}).$$
Then there exists $\gamma\in \mathbb{C}$ such that
$$\langle T(x), S(y)\rangle = \gamma \langle x, y\rangle \qquad (x, y\in \mathscr{E}).$$
\end{theorem}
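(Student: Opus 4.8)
The plan is to reduce the $C^*$-module statement to the Hilbert space statement of Theorem \ref{th.1.1} via the compression trick introduced in the preliminaries: for a minimal projection $e=\eta\otimes\eta$, the submodule $\mathscr{E}_e=e\mathscr{E}$ is a complex inner product space under $[x,y]={\rm tr}(\langle x,y\rangle)$, and $\langle x,y\rangle=[x,y]e$ for $x,y\in\mathscr{E}_e$. First I would check that $T,S$ map $\mathscr{E}_e$ into $\mathscr{F}_e$: since $T$ is $\mathscr{A}$-linear, $T(ex)=eT(x)\in\mathscr{F}_e$, and likewise for $S$. Moreover the orthogonality hypothesis restricts: if $x,y\in\mathscr{E}_e$ with $[x,y]=0$, then $\langle x,y\rangle=[x,y]e=0$, so $\langle T(x),S(y)\rangle=0$, and since $T(x),S(y)\in\mathscr{F}_e$ this reads $[T(x),S(y)]e=0$, i.e. $[T(x),S(y)]=0$. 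Thus $T|_{\mathscr{E}_e}$ and $S|_{\mathscr{E}_e}$ are linear maps $\mathscr{E}_e\to\mathscr{F}_e$ satisfying the hypothesis of Theorem \ref{th.1.1}, which yields a scalar $\gamma_e\in\mathbb{C}$ with $[T(x),S(y)]=\gamma_e[x,y]$ for all $x,y\in\mathscr{E}_e$; translating back, $\langle T(x),S(y)\rangle=\gamma_e\langle x,y\rangle$ for all $x,y\in\mathscr{E}_e$.

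Next I would show $\gamma_e$ does not depend on the choice of minimal projection, and that the identity propagates from the pieces $\mathscr{E}_e$ to all of $\mathscr{E}$. For independence, take two minimal projections $e,f$ and a nonzero partial isometry-type element connecting them (e.g. $u=\eta\otimes\xi$ with $e=\eta\otimes\eta$, $f=\xi\otimes\xi$, so $ue f = u$, $u^*u$ related to $f$). For $x\in\mathscr{E}_f$ we have $ux\in\mathscr{E}_e$, and $\mathscr{A}$-linearity gives $T(ux)=uT(x)$, $S(ux)=uS(x)$; computing $\langle T(ux),S(ux)\rangle$ two ways — once via the $\mathscr{E}_e$-identity with $\gamma_e$ and once by pulling the $u$'s out and using the $\mathscr{E}_f$-identity with $\gamma_f$ — forces $\gamma_e=\gamma_f$. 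Call the common value $\gamma$. To finish, I would use that $T,S$ are $\mathscr{A}$-linear together with the fact that, for a standard $C^*$-algebra, elements of the form $ex$ with $e$ minimal have dense enough span: more precisely, for arbitrary $x,y\in\mathscr{E}$ and any minimal projection $e$, the elements $ex,ey$ lie in $\mathscr{E}_e$, so $\langle e T(x), e S(y)\rangle = \langle T(ex), S(ey)\rangle = \gamma\langle ex, ey\rangle = \gamma e\langle x,y\rangle e$; since $e(\langle T(x),S(y)\rangle)e = \gamma e\langle x,y\rangle e$ holds for every minimal projection $e$ of the standard algebra $\mathscr{A}\subseteq\mathbb{B}(\mathscr{H})$, and minimal projections (rank-one projections) separate elements of $\mathbb{B}(\mathscr{H})$ in the sense that $eae=ebe$ for all rank-one $e$ implies $a=b$, we conclude $\langle T(x),S(y)\rangle=\gamma\langle x,y\rangle$ for all $x,y\in\mathscr{E}$.

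I expect the main obstacle to be the independence-of-$e$ step and the final separation argument: one must be careful that $T,S$ really are genuinely $\mathscr{A}$-linear (not merely $\mathbb{C}$-linear local maps — but that is an explicit hypothesis here, so $T(ax)=aT(x)$ is available for all $a\in\mathscr{A}$), and one must verify the linear-algebra fact that $\{eae : e \text{ rank-one projection}\}$ determines $a\in\mathbb{B}(\mathscr{H})$. The latter is elementary — if $e=\eta\otimes\eta$ then $eae=[a\eta,\eta]\,e$, so knowing all these scalars recovers the quadratic form $\eta\mapsto[a\eta,\eta]$ and hence $a$ by polarization — but it is the conceptual crux that lets Hilbert space information on each slice $\mathscr{E}_e$ be reassembled into a statement about the $\mathscr{A}$-valued inner product. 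A minor subtlety is ensuring $\gamma$ is a scalar rather than only a central multiplier: here, because $\mathscr{A}$ is standard, $Z(M(\mathscr{A}))=\mathbb{C}$ when $\mathbb{K}(\mathscr{H})\subseteq\mathscr{A}$, so the constant produced is automatically a complex number, matching the statement; if one preferred, one could also note directly that $\gamma_e$ arising from Theorem \ref{th.1.1} is by construction in $\mathbb{C}$.
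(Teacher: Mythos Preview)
Your proposal is correct and follows essentially the same approach as the paper's proof: both restrict $T,S$ to the Hilbert space slices $\mathscr{E}_e$ via minimal projections, invoke Theorem~\ref{th.1.1} to obtain a scalar $\gamma_e$, use a rank-one partial isometry $u$ linking two minimal projections to show $\gamma_e=\gamma_f$, and then appeal to the fact that rank-one projections separate elements of a standard $C^*$-algebra to pass from $e\langle T(x),S(y)\rangle e=\gamma\, e\langle x,y\rangle e$ for all minimal $e$ to the global identity. The only cosmetic difference is that the paper carries out the computation on the diagonal $\langle T(x),S(x)\rangle$ and polarizes at the end, whereas you work with two variables throughout.
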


\begin{proof}
The following proof is a modification of the one given by Ili\v{s}evi\'{c} and
Turn{\v{s}}ek \cite[Theorem 3.1]{I.T}. Let $e = \zeta\otimes\zeta$ and $f = \eta\otimes\eta$ be minimal projections
in $\mathscr{A}$ and let $u = \zeta\otimes\eta$.
Also, let $T_e= T_{|_{\mathscr{E}_e}}$ and $S_e= S_{|_{\mathscr{E}_e}}$.
For linear operators $T_e, S_e: \mathscr{E}_e
\to \mathscr{F}_e$ we have $[x, y] = 0\, \Longrightarrow
\,[{T_e}(x), {S_e}(y)] = 0$ for all $x, y\in \mathscr{E}_e$.
Hence, by Theorem \ref{th.1.1}, there exists $\gamma_e\in \mathbb{C}$ such that
$$[T(ex), S(ex)] = \gamma_e \|ex\|^2 = \gamma_e [ex, ex] \qquad (x\in \mathscr{E}_e).$$
This yields $[T(ex), S(ex)]e = \gamma_e [ex, ex]e$, thus
$$\langle T(ex), S(ex)\rangle
= \gamma_e \langle ex, ex\rangle = \gamma_e |ex|^2,$$
or equivalently,
\begin{align}\label{id.212.71}
e\langle T(x), S(x)\rangle e = \gamma_e e|x|^2e \qquad (x\in \mathscr{E}).
\end{align}
Similarly, there exists $\gamma_f\in \mathbb{C}$ such that
\begin{align}\label{id.212.72}
f\langle T(x), S(x)\rangle f = \gamma_f f|x|^2f \qquad (x\in \mathscr{E}).
\end{align}
Since $ufu^* = e$, it follows from (\ref{id.212.71}) and (\ref{id.212.72}) that
\begin{align*}
\gamma_e [ex, ex]e& = \gamma_e \langle ex, ex\rangle = \gamma_e e\langle x, x\rangle e
\\& = e\langle T(x), S(x)\rangle e = ufu^* \langle T(x), S(x)\rangle ufu^*
\\& = uf \langle T(u^*x), S(u^*x)\rangle fu^* = u \gamma_f f|u^*x|^2f u^*
\\&= \gamma_f ufu^* |x|^2 ufu^* = \gamma_f e\langle x, x\rangle e
\\& = \gamma_f \langle ex, ex\rangle = \gamma_f [ex, ex]e.
\end{align*}
Thus
$$\gamma_e [ex, ex] = \gamma_f [ex, ex] \qquad (x\in \mathscr{E}).$$
Replacing $x$ with $\frac{x}{\|ex\|}$, we conclude $\gamma_e = \gamma_f = \gamma$.
Hence, by (\ref{id.212.71}), we get
$$e\langle T(x), S(x)\rangle e = e\gamma|x|^2e \qquad (x\in \mathscr{E})$$
for all minimal projections $e \in \mathscr{A}$. Having in mind that $\mathscr{A}$ is a
standard $C^*$-algebra, we deduce that
$$\langle T(x), S(x)\rangle = \gamma |x|^2 \qquad (x\in \mathscr{E}).$$
Now, by the polarization identity, we arrive at
$$\langle T(x), S(y)\rangle = \gamma \langle x, y\rangle \qquad (x, y\in \mathscr{E}).$$
\end{proof}
\begin{corollary}\label{cr.212.10}
Let $\mathscr{A}$ be a standard $C^{*}$-algebra and let
$\{\mathscr{E}, {\langle \cdot, \cdot\rangle}_1\}$ be an inner product $\mathscr{A}$-module.
Suppose that ${\langle \cdot, \cdot\rangle}_2$
is a second $\mathscr{A}$-valued inner product on $\mathscr{E}$.
If ${\langle x, y\rangle}_1 = 0$ implies ${\langle x, y\rangle}_2 = 0$ for every
$x, y\in \mathscr{E}$, then there exists a positive constant
$\gamma\in \mathbb{C}$ such that ${\langle x, y\rangle}_2 = \gamma {\langle x, y\rangle}_1$
for each $x, y\in \mathscr{E}$.
\end{corollary}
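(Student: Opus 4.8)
The plan is to recognize the statement as Theorem \ref{th.212.7} applied to the identity operator, read between two different inner products. Concretely, regard $\mathscr{F} := \mathscr{E}$ equipped with the $\mathscr{A}$-valued inner product $\langle\cdot,\cdot\rangle_2$ as an inner product $\mathscr{A}$-module, and set $T = S = \mathrm{id}_{\mathscr{E}} : (\mathscr{E},\langle\cdot,\cdot\rangle_1) \longrightarrow (\mathscr{F},\langle\cdot,\cdot\rangle_2)$. If $\mathscr{E} = \{0\}$ there is nothing to prove, so assume $\mathscr{E} \neq \{0\}$; then $T$ and $S$ are nonzero, and they are $\mathscr{A}$-linear since $\mathrm{id}(ax) = a\,\mathrm{id}(x)$. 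The hypothesis ``$\langle x,y\rangle_1 = 0$ implies $\langle x,y\rangle_2 = 0$'' is precisely the assertion that $x \perp y$ in $\mathscr{E}$ implies $T(x) \perp S(y)$ in $\mathscr{F}$. Since $\mathscr{A}$ is standard, Theorem \ref{th.212.7} applies and produces a scalar $\gamma \in \mathbb{C}$ with
$$\langle x, y\rangle_2 = \langle T(x), S(y)\rangle = \gamma\,\langle x, y\rangle_1 \qquad (x, y \in \mathscr{E}).$$

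It then remains only to upgrade ``$\gamma \in \mathbb{C}$'' to ``$\gamma$ is a positive real number''. Fix $x \in \mathscr{E}$ with $x \neq 0$ and put $a := \langle x, x\rangle_1$, a nonzero positive element of $\mathscr{A}$; the displayed identity gives $\gamma a = \langle x, x\rangle_2 \geq 0$. Taking adjoints in $\gamma a = (\gamma a)^{*}$ yields $(\gamma - \overline{\gamma})a = 0$, whence $\gamma = \overline{\gamma}$ because $a \neq 0$; so $\gamma \in \mathbb{R}$. Inspecting the spectrum of $a$ (which is contained in $[0,\infty)$ and contains a point $\lambda > 0$) together with $\gamma a \geq 0$ forces $\gamma \geq 0$. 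Finally $\gamma \neq 0$, since $\gamma = 0$ would make $\langle x, x\rangle_2 = 0$ for every $x \in \mathscr{E}$, contradicting positive-definiteness of $\langle\cdot,\cdot\rangle_2$ on the nonzero module $\mathscr{E}$. Hence $\gamma > 0$, which is the assertion.

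I do not anticipate a genuine obstacle: the corollary is essentially a dictionary translation of Theorem \ref{th.212.7} to the identity map. The only point deserving care is the last paragraph — the passage from an a priori complex $\gamma$ to a strictly positive real one — and even there the $C^{*}$-algebraic positivity argument uses nothing more than one nonzero vector and the non-degeneracy axiom of the second inner product; one should just be a little attentive that ``positive constant $\gamma \in \mathbb{C}$'' in the statement is to be understood as a positive real scalar.
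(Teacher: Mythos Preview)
Your proof is correct and follows essentially the same approach as the paper: set $T=S=\mathrm{id}:(\mathscr{E},\langle\cdot,\cdot\rangle_1)\to(\mathscr{E},\langle\cdot,\cdot\rangle_2)$ and invoke Theorem~\ref{th.212.7}. In fact you give more detail than the paper, which omits the verification that $\gamma$ is a positive real; your positivity argument is fine.
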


\begin{proof}
Take $\mathscr{E} = \mathscr{F}$ as $\mathscr{A}$-modules and set $T = S =I:
(\mathscr{E}, {\langle \cdot,\cdot\rangle}_1) \to (\mathscr{E}, {\langle ., .\rangle}_2)$, where $I$
denotes the identity operator. Applying Theorem \ref{th.212.7} the assertion follows.
\end{proof}
\begin{remark}
According to \cite[Example 3.15]{M.Z}, the assumption of $\mathscr{A}$-linearity,
even in the case $T = S$, is necessary in Theorem \ref{th.212.7}. If either $S$ or $T$ is
adjointable and $\gamma \not= 0$, then ${\rm ran}(S)={\rm ran(T)}=\mathscr{E}$,
because e.g. $S^*T=\gamma I$ for a real number $\gamma > 0$.
\end{remark}

In the following result, we employ some ideas of \cite{L.N.W.1} to consider local operators between
inner product $\mathscr{A}$-modules, i.e. linear operators $T: \mathscr{E} \to \mathscr{F}$ such
that $ax=0$ for $x\in\mathscr{E}$ and $a\in\mathscr{A}$ forces $aT(x)=0$ in $\mathscr{F}$. We are
interested in operators which preserve orthogonality. Since we do not suppose that the local operators
under consideration are bounded, the $\mathscr{A}$-linearity should be obtained separately.
\begin{theorem}\label{cr.212.8}
Let $\mathscr{A}$ be a standard $C^{*}$-algebra on a Hilbert space $\mathscr{H}$
and let $\mathscr{E}$ and $\mathscr{F}$ be two inner product $\mathscr{A}$-modules.
Suppose that $T, S:\mathscr{E}\to \mathscr{F}$ are two nonzero
local operators such that
$$x \perp y\, \Longrightarrow \,T(x) \perp S(y) \qquad (x, y\in \mathscr{E}).$$
Then there exists $\gamma\in \mathbb{C}\setminus \{ 0 \}$ such that
$$\langle T(x), S(y)\rangle = \gamma \langle x, y\rangle \qquad (x, y\in \mathscr{E}).$$
Moreover, the operators $T$ and $S$ are $\mathscr{A}$-linear.
\end{theorem}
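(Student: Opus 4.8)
The plan is to reduce Theorem \ref{cr.212.8} to Theorem \ref{th.212.7} by first establishing that the local maps $T$ and $S$ are automatically $\mathscr{A}$-linear, and then invoking the already-proved result for $\mathscr{A}$-linear maps. The subtlety, as the authors flag, is that we are \emph{not} assuming boundedness, so we cannot cite the standard fact that every bounded local map is $\mathscr{A}$-linear; the $\mathscr{A}$-linearity has to come out of the orthogonality-preserving hypothesis itself.

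First I would exploit the standard $C^*$-algebra structure exactly as in the proof of Theorem \ref{th.212.7}: for each minimal projection $e=\zeta\otimes\zeta$, restrict to the Hilbert space $\mathscr{E}_e = e\mathscr{E}$ and observe that $T_e = T|_{\mathscr{E}_e}$, $S_e = S|_{\mathscr{E}_e}$ are still $\mathbb{C}$-linear and still satisfy the orthogonality-preserving implication with respect to the scalar inner product $[x,y] = \mathrm{tr}(\langle x,y\rangle)$. (Here I need that locality is preserved under the restriction, which is immediate since $e\mathscr{E}_e=\mathscr{E}_e$ and $eT(ex)=T(e\cdot ex)$ once we know $T$ respects the action of the scalars $\mathbb{C}e$ — but that much of $\mathscr{A}$-linearity is available because for $\lambda\in\mathbb{C}$, locality applied to $(a-\lambda e)x$ with suitable $a$ gives $eT(x)=\lambda^{-1}\cdots$; more simply, one uses that $\mathbb{C}$-linearity already handles scalars.) Then Theorem \ref{th.1.1} produces a constant $\gamma_e\in\mathbb{C}$ with $[T(ex),S(ex)] = \gamma_e [ex,ex]$ for $x\in\mathscr{E}$, hence
\begin{align*}
e\langle T(x),S(x)\rangle e = \gamma_e\, e|x|^2 e \qquad (x\in\mathscr{E}).
\end{align*}
The same transport-by-partial-isometry argument using $u=\zeta\otimes\eta$ that appears in the proof of Theorem \ref{th.212.7} shows all the $\gamma_e$ coincide, call the common value $\gamma$, and since $\mathscr{A}$ is standard (so minimal projections separate points of $\mathscr{A}_+$ and their images $e\mathscr{E}$ are norm-dense enough), we upgrade to $\langle T(x),S(x)\rangle = \gamma|x|^2$ for all $x\in\mathscr{E}$. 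The one place where locality (rather than $\mathscr{A}$-linearity) must be used carefully is in justifying the identity $\langle T(ax),S(ax)\rangle = a\langle T(x),S(x)\rangle a^*$ for $a$ a partial isometry built from rank-one operators, or alternatively in deducing $e\langle T(x),S(x)\rangle e = \langle T(ex),S(ex)\rangle$: locality gives $T(ex)-eT(x)$ has the property that multiplying by $e$ annihilates it only after one checks $e(T(x)-T(ex))$ vanishes, which follows because $(1-e)\cdot(x - ex)$... — this bookkeeping with the non-unital standard algebra is the main obstacle, and I would handle it by working inside the minimal ideal $\mathbb{K}(\mathscr{H})$ where rank-one manipulations are unambiguous.

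Next, to get the full conclusion $\langle T(x),S(y)\rangle = \gamma\langle x,y\rangle$ for all $x,y$, I would apply the polarization identity: from $\langle T(x),S(x)\rangle=\gamma|x|^2$ for all $x$, replacing $x$ by $x+y$, $x+iy$, etc., and using \emph{linearity} of $T$ and $S$ (which we have — they are $\mathbb{C}$-linear by hypothesis) yields the sesquilinear identity on all pairs. This step needs nothing beyond $\mathbb{C}$-linearity, so it goes through verbatim as in Theorem \ref{th.212.7}.

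Finally, for the ``moreover'' clause that $T$ and $S$ are $\mathscr{A}$-linear: having $\langle T(ax),S(y)\rangle = \gamma\langle ax,y\rangle = \gamma a\langle x,y\rangle = a\langle T(x),S(y)\rangle = \langle aT(x),S(y)\rangle$ for all $y\in\mathscr{E}$, we conclude $\langle T(ax)-aT(x),S(y)\rangle = 0$ for all $y$. If $\gamma\neq 0$ one can show $\overline{S(\mathscr{E})}$ is all of $\mathscr{F}$ or at least that the pairing is non-degenerate enough — but more robustly, symmetry gives also $\langle T(x),S(ay)\rangle = \langle T(x),aS(y)\rangle$, and combining the two one deduces $T(ax)=aT(x)$ provided $\gamma\neq 0$; the case $\gamma=0$ needs separate (easy) treatment since then $\langle T(\mathscr{E}),S(\mathscr{E})\rangle=0$ and one argues directly from locality that $T(ax)-aT(x)$ is forced to be zero by testing against enough elements, or one notes $T,S$ nonzero and local already forces $\mathscr{A}$-linearity via a density argument inside each $\mathscr{E}_e$. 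I expect the $\gamma=0$ degenerate case and the non-unitality bookkeeping to be the only real friction; everything else is a faithful adaptation of the proof of Theorem \ref{th.212.7} with ``local'' substituted for ``$\mathscr{A}$-linear'' and the $\mathscr{A}$-linearity recovered a posteriori.
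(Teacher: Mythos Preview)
Your proposal has a genuine gap at exactly the point you flag as ``the main obstacle,'' and your suggested fix (``work inside the minimal ideal $\mathbb{K}(\mathscr{H})$'') is not a proof. Two concrete failures:

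\textbf{The transport step needs finite-rank linearity, not just locality.} You obtain $e\langle T(x),S(x)\rangle e=\gamma_e\, e|x|^2 e$ correctly (locality gives $eT(x)=eT(ex)$ from $e(x-ex)=0$, and that is enough to define $T_e$ and apply Theorem~\ref{th.1.1}). But showing $\gamma_e=\gamma_f$ via the partial isometry $u=\zeta\otimes\eta$ requires rewriting $e\langle T(x),S(x)\rangle e$ as $u\,f\langle T(u^*x),S(u^*x)\rangle f\,u^*$, and that step uses $u^*T(x)=T(u^*x)$. This is $\mathscr{A}$-linearity for a rank-one operator that is not a projection, and locality alone does not give it. Without this, the constants $\gamma_e$ may vary with $e$ and your argument stalls.

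\textbf{The ``moreover'' argument is circular.} From $\langle T(ax)-aT(x),S(y)\rangle=0$ for all $y\in\mathscr{E}$ you cannot conclude $T(ax)=aT(x)$: nothing guarantees $S(\mathscr{E})$ is total in $\mathscr{F}$, and the $\gamma=0$ case makes this vacuous.

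The paper resolves both issues by reversing the order of your argument: it \emph{first} extracts $T(px)=pT(x)$ for every projection $p\in\mathbb{K}(\mathscr{H})$ directly from locality (using $p((1-p)x)=0\Rightarrow pT((1-p)x)=0$ and the symmetric implication), hence $T(ax)=aT(x)$ for all finite-rank $a$ by linear combination. It then builds genuinely $\mathbb{K}(\mathscr{H})$-linear maps $\widetilde{T},\widetilde{S}$ on $\mathbb{K}(\mathscr{H})\cdot\mathscr{E}$ as norm limits $\widetilde{T}(x)=\lim_j f_jT(x)$ along a finite-rank approximate unit $(f_j)$, checks they inherit orthogonality preservation, and applies Theorem~\ref{th.212.7} to $\widetilde{T},\widetilde{S}$ as a black box. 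Finally it pulls back via $f_j\langle T(x),S(x)\rangle f_j=\gamma f_j|x|^2f_j$ for all $j$. The $\mathscr{A}$-linearity then comes from the finite-rank identity already established, not from the inner-product formula.
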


\begin{proof}
Let $(f_j)_{j\in J}$ be an approximate unit in $\mathbb{K}(\mathscr{H})$ consisting of finite
rank positive operators. Suppose that $p\in \mathbb{K}(\mathscr{H})$
is a projection. Since $T$ is local, the known equality $p(1-p)x=0$ ensures $pT((1-p)x)=0$, and
analogously, we get $(1-p)T(px)=0$. From the complex linearity of the operator $T$ we derive $pT(x)=T(px)$
from these two equalities. Consequently, $T(ax) = aT(x)$ for all finite rank operators $a \in
\mathscr{A}$ and all $x\in \mathscr{E}$.
Now, for each $x\in \mathbb{K}(\mathscr{H})\cdot \mathscr{E}$, there exist $c\in
\mathbb{K}(\mathscr{H})$ and $z\in \mathscr{E}$ such that $x = cz$.
Hence
$$\lim\limits_{j} \big\|f_jT(x) - cT(z) \big\| = \lim\limits_{j} \big\|f_jcT(z) - cT(z) \big\| = 0.$$
Define $\widetilde{T}: \mathbb{K}(\mathscr{H})\cdot \mathscr{E}
\to \mathbb{K}(\mathscr{H})\cdot \mathscr{F}$ by setting
$\widetilde{T}(x)$ to be the norm limit of $f_jT(x)$.
Notice that $\lim\limits_{j} \big\|f_jT(x) - cT(z) \big\| = 0$
shows that $\widetilde{T}(x)$ depend on neither the choice of $(f_j)_{j\in J}$ nor the decomposition $x = cz$. Also, $\widetilde{T}$ is $\mathbb{K}(\mathscr{H})$-linear since,
$$\widetilde{T}(ax) = \widetilde{T}(acz) = acT(z) = a\widetilde{T}(x)$$
for all $x\in \mathscr{E}$ and all $a\in \mathbb{K}(\mathscr{H})$.
Similarly, define $\widetilde{S}: \mathbb{K}(\mathscr{H})\cdot \mathscr{E}
\to \mathbb{K}(\mathscr{H})\cdot \mathscr{F}$ by setting
$\widetilde{S}(y)$ to be the norm limit of $f_jS(y)$.
Now, if $x, y\in \mathbb{K}(\mathscr{H})\cdot \mathscr{E}$ with $\langle x, y\rangle = 0$, then
$\langle T(x), S(y)\rangle = 0$, which secures $\langle f_jT(x), f_kS(y)\rangle = 0$ for all $j,
k\in J$. Thus $\langle \widetilde{T}(x), \widetilde{S}(y)\rangle = 0$. Hence for
$\mathbb{K}(\mathscr{H})$-linear operators $\widetilde{T}$ and $\widetilde{S}$ we have
$$x \perp y\, \Longrightarrow \,\widetilde{T}(x)\perp \widetilde{S}(y)
\qquad (x, y\in \mathbb{K}(\mathscr{H})\cdot \mathscr{E}).$$
So, by Theorem \ref{th.212.7}, there exists $\gamma\in \mathbb{C}$ such that
$\langle \widetilde{T}(x), \widetilde{S}(x)\rangle = \gamma |x|^2$ for all
$x\in \mathbb{K}(\mathscr{H})\cdot \mathscr{E}$. Thus
$$f_j\langle T(x), S(x)\rangle f_j = \langle \widetilde{T}(f_jx), \widetilde{S}(f_jx)\rangle
= \gamma |f_jx|^2 = f_j\gamma |x|^2 f_j$$
for all $x\in \mathscr{E}$ and all $j\in J$.
Consequently, if $\langle T(x), S(x)\rangle - \gamma |x|^2 \in \mathscr{A}$,
then always $f_j\big(\langle T(x), S(x)\rangle - \gamma |x|^2\big)f_j=0$,
which yields $\langle T(x), S(x)\rangle - \gamma |x|^2 =0$.
Hence, $\langle T(x), S(x)\rangle = \gamma |x|^2$ for all $x\in \mathscr{E}$ . Utilizing the polarization identity, we obtain
$$\langle T(x), S(y)\rangle = \gamma \langle x, y\rangle \qquad (x, y\in \mathscr{E}).$$
\end{proof}

Combining Proposition \ref{pr.212.6} and Theorem \ref{cr.212.8}, we reach the next
result. In fact, this result is a generalization of \cite[Theorem 3.1]{I.T} and
\cite[Theorem 4.10]{Z.M.F}. The result also generalizes \cite[Corollary 3.2]{L.N.W.1}.

\begin{theorem}\label{cr.212.9}
Let $\mathscr{A}$ be a standard $C^{*}$-algebra and let $\mathscr{E}$ and $\mathscr{F}$
be two inner product $\mathscr{A}$-modules. Suppose that $T, S:\mathscr{E} \to
\mathscr{F}$ are two nonzero local operators. The following statements are
mutually equivalent:
\begin{itemize}
\item[(i)] $x \perp y\, \Longrightarrow \,T(x) \perp S(y)$ for all $x, y\in \mathscr{E}$.
\item[(ii)] There exists $\gamma\in \mathbb{C}\setminus \{ 0 \}$ such that $\langle T(x), S(x) \rangle =
\gamma |x|^2$ for all $x\in \mathscr{E}$.
\item[(iii)] There exists $\gamma\in \mathbb{C}\setminus \{ 0 \}$ such that $\langle T(x), S(y) \rangle =
\gamma \langle x, y\rangle$ for all $x, y\in \mathscr{E}$.
\end{itemize}
Under these conditions the operators $T$ and $S$ are $\mathscr{A}$-linear.
\end{theorem}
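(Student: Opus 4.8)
The plan is to prove Theorem \ref{cr.212.9} by establishing the cycle of implications (iii) $\Rightarrow$ (ii) $\Rightarrow$ (i) $\Rightarrow$ (iii), which is essentially a bookkeeping exercise once the earlier results are in hand. The nontrivial direction is (i) $\Rightarrow$ (iii), and this is exactly the content of Theorem \ref{cr.212.8}: if $T,S$ are two nonzero local mappings satisfying the orthogonality-preserving property $x\perp y \Rightarrow T(x)\perp S(y)$, then there is a scalar $\gamma\in\mathbb{C}$ with $\langle T(x), S(y)\rangle = \gamma\langle x,y\rangle$ for all $x,y\in\mathscr{E}$, and moreover $T$ and $S$ are $\mathscr{A}$-linear. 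So the first step is simply to invoke Theorem \ref{cr.212.8} verbatim to obtain both (iii) and the final assertion about $\mathscr{A}$-linearity.

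Next I would record the trivial implication (iii) $\Rightarrow$ (ii): setting $y=x$ in (iii) gives $\langle T(x), S(x)\rangle = \gamma\langle x,x\rangle = \gamma|x|^2$ for all $x\in\mathscr{E}$, which is precisely (ii). For (ii) $\Rightarrow$ (iii) one would like to polarize, but since (ii) only asserts the existence of a $\gamma$ with $\langle T(x),S(x)\rangle=\gamma|x|^2$, a little care is needed: $T$ and $S$ are only assumed local (hence $\mathbb{C}$-linear), not yet known to be $\mathscr{A}$-linear, so the standard polarization identity in the $\mathscr{A}$-valued inner product applies because both $T$ and $S$ are $\mathbb{C}$-linear — writing $4\langle T(x),S(y)\rangle = \sum_{k=0}^{3} i^k \langle T(x+i^k y), S(x+i^k y)\rangle$ and substituting (ii) into each summand yields $4\langle T(x),S(y)\rangle = \gamma\sum_{k=0}^{3} i^k |x+i^k y|^2 = 4\gamma\langle x,y\rangle$. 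This gives (iii). Alternatively, and more economically, I would route (ii) $\Rightarrow$ (i) through Proposition \ref{pr.212.6}: that proposition says precisely that $\langle T(x),S(x)\rangle=\gamma|x|^2$ (with $\gamma\in Z(M(\mathscr{A}))$, and a scalar $\gamma\in\mathbb{C}$ certainly lies in $Z(M(\mathscr{A}))$) forces the orthogonality-preserving property $x\perp y \Rightarrow T(x)\perp S(y)$. So (ii) $\Rightarrow$ (i) is immediate from Proposition \ref{pr.212.6}.

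Putting the pieces together, the cleanest write-up closes the loop as (i) $\Rightarrow$ (iii) $\Rightarrow$ (ii) $\Rightarrow$ (i): the first arrow is Theorem \ref{cr.212.8}, the second is the substitution $y=x$, and the third is Proposition \ref{pr.212.6}. Finally, the closing statement that $T$ and $S$ are $\mathscr{A}$-linear under these equivalent conditions follows because (i) holds in each case (the conditions being equivalent), and then Theorem \ref{cr.212.8} delivers the $\mathscr{A}$-linearity. I do not anticipate any genuine obstacle here: the theorem is a packaging of Theorem \ref{cr.212.8} and Proposition \ref{pr.212.6}. The only point requiring a moment's attention is making sure that in the step using the polarization identity (if one goes that route rather than through Proposition \ref{pr.212.6}) one only uses $\mathbb{C}$-linearity of $T$ and $S$, which is part of the definition of "local," so that the argument is not circular with respect to the yet-to-be-established $\mathscr{A}$-linearity.
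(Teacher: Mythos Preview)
Your proposal is correct and matches the paper's approach exactly: the paper states the theorem as an immediate consequence of combining Proposition~\ref{pr.212.6} and Theorem~\ref{cr.212.8}, which is precisely the cycle (i)~$\Rightarrow$~(iii)~$\Rightarrow$~(ii)~$\Rightarrow$~(i) you describe. Your additional care about the polarization route and the $\mathbb{C}$-linearity hypothesis is a nice elaboration, but the paper does not even spell this out.
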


\begin{corollary} \label{Cor_2.10}
Let $\mathscr{A}$ be a standard $C^{*}$-algebra and let $\mathscr{E}$ be an inner
product $\mathscr{A}$-module. Suppose that $T:\mathscr{E} \to \mathscr{E}$ is a
nonzero local operator such that
$$x \perp y\, \Longrightarrow \,T(x) \perp y \qquad (x, y\in \mathscr{E}).$$
Then there exists $\gamma\in \mathbb{C}\setminus \{ 0 \}$ such that $T(x) = \gamma x$ for all $x\in
\mathscr{E}$.
\end{corollary}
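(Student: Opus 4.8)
The plan is to apply Theorem \ref{cr.212.9} with the second mapping $S$ taken to be the identity operator on $\mathscr{E}$, so that $\mathscr{F} = \mathscr{E}$. First I would observe that $S = \mathrm{id}$ is certainly a nonzero $\mathscr{A}$-linear (hence local) mapping, and that the hypothesis $x \perp y \Longrightarrow T(x) \perp y$ is precisely the statement that the pair $(T, \mathrm{id})$ has the orthogonality preserving property $x \perp y \Longrightarrow T(x) \perp S(y)$. Since $T$ is assumed to be a nonzero local mapping, all the hypotheses of Theorem \ref{cr.212.9} are met.

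By the equivalence (i) $\Leftrightarrow$ (iii) in Theorem \ref{cr.212.9}, there exists $\gamma \in \mathbb{C}$ such that $\langle T(x), S(y) \rangle = \gamma \langle x, y \rangle$ for all $x, y \in \mathscr{E}$; with $S = \mathrm{id}$ this reads
$$\langle T(x), y \rangle = \gamma \langle x, y \rangle = \langle \gamma x, y \rangle \qquad (x, y \in \mathscr{E}).$$
Hence $\langle T(x) - \gamma x, y \rangle = 0$ for every $y \in \mathscr{E}$. Setting $y = T(x) - \gamma x$ gives $\langle T(x) - \gamma x, T(x) - \gamma x \rangle = 0$, and by the definiteness of the $\mathscr{A}$-valued inner product this forces $T(x) - \gamma x = 0$, i.e. $T(x) = \gamma x$ for all $x \in \mathscr{E}$.

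There is essentially no obstacle here: the corollary is a direct specialization of Theorem \ref{cr.212.9}, and the only extra ingredient is the standard trick of using the definiteness of $\langle \cdot, \cdot \rangle$ to pass from ``$\langle z, y \rangle = 0$ for all $y$'' to ``$z = 0$.'' If one wishes, one can note in passing that by the last sentence of Theorem \ref{cr.212.9} the mapping $T$ is automatically $\mathscr{A}$-linear, which is of course also immediate from the explicit formula $T(x) = \gamma x$. One could alternatively argue via (ii) of Theorem \ref{cr.212.9}, obtaining $\langle T(x), x \rangle = \gamma |x|^2$ first and then polarizing, but invoking (iii) directly is the cleanest route.
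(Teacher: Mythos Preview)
Your proof is correct and follows essentially the same approach as the paper: apply Theorem~\ref{cr.212.9} with $S = \mathrm{id}$ to obtain $\langle T(x) - \gamma x, y\rangle = 0$ for all $y$, and then conclude $T(x) = \gamma x$. The only difference is that you spell out the step of choosing $y = T(x) - \gamma x$ and invoking definiteness, whereas the paper leaves this implicit.
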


\begin{proof}
Applying Theorem \ref{cr.212.9} to $T$ and $S = I$ we obtain, with some $\gamma
\in \mathbb{C}$, $\langle T(x), y\rangle = \gamma\langle x, y\rangle$ for all
$x, y\in \mathscr{E}$. Hence, $\langle T(x) - \gamma x, y\rangle = 0$ for all $x, y\in
\mathscr{E}$. Thus $T(x) = \gamma x$ for all $x\in \mathscr{E}$.
\end{proof}
\begin{corollary}
Let $\mathscr{A}$ be a standard $C^{*}$-algebra and let $\mathscr{E}$ and
$\mathscr{F}$ be two inner product $\mathscr{A}$-modules. Let
$T_0, S_0:\mathscr{E} \to \mathscr{F}$ be two nonzero local operators such that
$$x \perp y\, \Longrightarrow \,T_0(x) \perp S_0(y) \qquad (x, y\in \mathscr{E}).$$
Suppose, the linear operators $T, S:\mathscr{E} \to \mathscr{F}$ are sufficiently close to
$T_0$ and $S_0$, respectively, namely that for $\theta_1, \theta_2\in[0, 1)$
and for all $x, y\in \mathscr{E}$
$$\|T(x) - T_0(x)\|\leq \theta_1 \|T(x)\| \qquad \mbox{and}\qquad
\|S(y) - S_0(y)\|\leq \theta_2 \|S(y)\|.$$
Then
$$x\perp y\, \Longrightarrow \,T(x)\perp^\varepsilon S(y) \qquad (x, y\in \mathscr{E}),$$
where $\varepsilon = \theta_1\theta_2 + \theta_1(\theta_2 + 1) + (\theta_1 + 1)\theta_2$.
\end{corollary}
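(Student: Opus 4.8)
The plan is to bound $\|\langle T(x), S(y)\rangle\|$ directly for an arbitrary orthogonal pair $x\perp y$, exploiting that the corresponding inner product for the pair $T_0, S_0$ vanishes. First fix $x, y\in\mathscr{E}$ with $x\perp y$; the orthogonality-preserving property of the pair $(T_0, S_0)$ gives $\langle T_0(x), S_0(y)\rangle = 0$ (alternatively one may invoke Theorem \ref{cr.212.9} to get $\langle T_0(x), S_0(y)\rangle = \gamma\langle x, y\rangle = 0$). Writing $T(x) = T_0(x) + \big(T(x) - T_0(x)\big)$ and $S(y) = S_0(y) + \big(S(y) - S_0(y)\big)$ and expanding the $\mathscr{A}$-valued sesquilinear form, the term $\langle T_0(x), S_0(y)\rangle$ drops out and we are left with
$$\langle T(x), S(y)\rangle = \langle T_0(x), S(y) - S_0(y)\rangle + \langle T(x) - T_0(x), S_0(y)\rangle + \langle T(x) - T_0(x), S(y) - S_0(y)\rangle.$$

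Next, apply the Cauchy--Schwarz inequality $\|\langle a, b\rangle\|\le\|a\|\,\|b\|$, valid in any inner product $\mathscr{A}$-module, to each of the three summands, and control $\|T_0(x)\|$ and $\|S_0(y)\|$ by the triangle inequality together with the closeness hypotheses: $\|T_0(x)\|\le\|T(x)\| + \|T(x) - T_0(x)\|\le(1+\theta_1)\|T(x)\|$, and similarly $\|S_0(y)\|\le(1+\theta_2)\|S(y)\|$. Combining these estimates term by term, the first summand is bounded by $(1+\theta_1)\theta_2\,\|T(x)\|\,\|S(y)\|$, the second by $\theta_1(1+\theta_2)\,\|T(x)\|\,\|S(y)\|$, and the third by $\theta_1\theta_2\,\|T(x)\|\,\|S(y)\|$, so that
$$\|\langle T(x), S(y)\rangle\|\le\big[(1+\theta_1)\theta_2 + \theta_1(1+\theta_2) + \theta_1\theta_2\big]\|T(x)\|\,\|S(y)\|,$$
and the bracketed quantity is precisely $\varepsilon = \theta_1\theta_2 + \theta_1(\theta_2+1) + (\theta_1+1)\theta_2$. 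Hence $\|\langle T(x), S(y)\rangle\|\le\varepsilon\|T(x)\|\,\|S(y)\|$, i.e. $T(x)\perp^\varepsilon S(y)$, for every orthogonal pair $x, y\in\mathscr{E}$, which is the assertion.

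The argument is short and essentially a Cauchy--Schwarz bookkeeping exercise, so there is no genuine obstacle; the only points requiring a little care are keeping track of the three error terms and noting that the conclusion is literally the displayed norm inequality (one would additionally need $\theta_1, \theta_2$ small enough for $\varepsilon$ to lie in $[0,1)$ in the strict sense of the notation $\perp^\varepsilon$). It is also worth observing that neither the locality of $T_0, S_0$ nor the standardness of $\mathscr{A}$ is actually used in this proof: all that enters is that $(T_0, S_0)$ preserves orthogonality as a pair and that $T, S$ satisfy the stated proximity bounds.
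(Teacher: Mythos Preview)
Your proof is correct and follows essentially the same route as the paper: expand $\langle T(x),S(y)\rangle - \langle T_0(x),S_0(y)\rangle$ into the three cross-terms, bound each via Cauchy--Schwarz together with $\|T_0(x)\|\le(1+\theta_1)\|T(x)\|$ and $\|S_0(y)\|\le(1+\theta_2)\|S(y)\|$, and sum. The only difference is cosmetic: the paper invokes Theorem~\ref{cr.212.9} to obtain $\langle T_0(x),S_0(y)\rangle=\gamma_0\langle x,y\rangle$ before setting $x\perp y$, whereas you (rightly) observe that the bare orthogonality-preserving hypothesis already gives $\langle T_0(x),S_0(y)\rangle=0$, so neither the locality of $T_0,S_0$ nor the standardness of $\mathscr{A}$ is genuinely needed for this corollary.
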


\begin{proof}
From the assumption, we obtain
\begin{align}\label{id.212.711}
\|T_0(x)\|\leq (\theta_1 + 1)\|T(x)\| \quad \mbox{and}\quad
\|S_0(y)\|\leq (\theta_2 + 1)\|S(y)\| \qquad (x, y\in \mathscr{E}).
\end{align}
Also, by Theorem \ref{cr.212.9}, there exists $\gamma_0\in \mathbb{C}$ such that
\begin{align}\label{id.212.712}
\langle T_0(x), S_0(y)\rangle = \gamma_0 \langle x, y\rangle \qquad (x, y\in \mathscr{E}).
\end{align}
Now let $x, y\in \mathscr{E}$ and $x\perp y$. From (\ref{id.212.711}) and (\ref{id.212.712})
we get
\begin{align*}
\|\langle T(x), S(y)\rangle\| & =\|\langle T(x), S(y)\rangle - \langle T_0(x), S_0(y)\rangle\|
\\& = \big\|\langle T(x) - T_0(x), S(y) - S_0(y)\rangle + \langle T(x) - T_0(x), S_0(y)\rangle
\\& \qquad + \langle T_0(x), S(y) - S_0(y)\rangle\big\|
\\& \leq \|T(x) - T_0(x)\|\,\|S(y) - S_0(y)\| + \|T(x) - T_0(x)\|\,\|S_0(y)\|
\\& \qquad + \|T_0(x)\|\, \|S(y) - S_0(y)\|
\\& \leq \Big(\theta_1\theta_2 + \theta_1(\theta_2 + 1) + (\theta_1 + 1)\theta_2\Big)
\|T(x)\|\,\|S(y)\|
\\& = \varepsilon \|T(x)\|\,\|S(y)\|.
\end{align*}
Thus $\|\langle T(x), S(y)\rangle\| \leq \varepsilon \|T(x)\|\,\|S(y)\|$, and hence $T(x)
\perp^\varepsilon S(y)$.
\end{proof}
\section{$C^*$-linear orthogonality preserving operators}
In this section, we intend to show the properties of pairs of bounded $C^*$-linear
operators $\{T,S\}$ for which the orthogonality of two elements $x$ and $y$ of
the domain ensures the orthogonality of their
respective images $T(x)$ and $S(y)$. To get reasonable results we have
either to suppose or to derive the $C^*$-linearity of the operators. The proof
of the key equality relies, e.g., on the theory of the universal $*$-representation of
the $C^*$-algebra of coefficients, in which the bicommutant of the faithfully
represented $C^*$-algebra is $*$-isomorphic to its bidual Banach space.
Also, we make use of the existence of predual spaces of von Neumann
algebras and for self-dual Hilbert $C^*$-modules over them. For a
concise explanation we refer the reader to \cite[Ch. II, 2+3; Ch. V, 1]{Takesaki} and
to \cite[{\S\S} 3+4]{Pa1973}.

We need some prerequisites for the next theorem; see \cite{Frank:ZAA, F.M.P}.
For a Hilbert $\mathscr{A}$-module $\mathscr{E}$ over a $C^*$-algebra
$\mathscr{A}$, one can extend ${\mathscr E}$ canonically to a Hilbert
$\mathscr{A}^{**}$-module ${\mathscr E}^{\#}$ over the bidual Banach space
and von Neumann algebra $\mathscr{A}^{**}$ of $\mathscr{A}$
\cite[Theorem 3.2, Proposition 3.8, and {\S }4]{Pa1973}. For this the
$\mathscr{A}^{**}$-valued pre-inner product can be defined by the formula
     $$[a\otimes x,b\otimes y]=a^*\langle x,y\rangle b,$$
for elementary tensors of $\mathscr{A}^{**}\otimes {\mathscr E}$, where
$a,b\in \mathscr{A}^{**}$, $x,y \in {\mathscr E}$. The quotient module
of $\mathscr{A}^{**}\otimes \mathscr{E}$ by the
set of all isotropic vectors is denoted by ${\mathscr E}^{\#}$.
Denote the canonical isometric module embedding of $\mathcal E$ into
${\mathcal E}^{\#}$  described in \cite{Pa1973} by $\pi'$.
The quotient module can be canonically completed to a self-dual Hilbert
$\mathscr{A}^{**}$-module $\mathscr G$ which is isometrically algebraically
isomorphic to the $\mathscr{A}^{**}$-dual $\mathscr{A}^{**}$-module of
${\mathscr E}^{\#}$.  In addition, $\mathscr G$ is a dual Banach space itself;
cf. \cite[Theorem 3.2, Proposition 3.8, and {\S}4]{Pa1973}.
Every $\mathscr{A}$-linear bounded operator $T: {\mathscr E} \to {\mathscr E}$
can be continued to a unique $\mathscr{A}^{**}$-linear operator $T: {\mathscr E}^{\#}
\to {\mathscr E}^{\#}$ preserving the operator norm and obeying the
canonical embedding $\pi'({\mathscr E})$ of
$\mathscr E$ into ${\mathscr E}^{\#}$. Similarly, $T$ can be
further extended to the self-dual Hilbert ${\mathscr A}^{**}$-module $\mathscr G$.
The extension is such that the isometrically algebraically embedded
copy $\pi'({\mathscr E})$ of $\mathscr E$ in $\mathscr G$ is a
$w^*$-dense $\mathscr{A}$-submodule of $\mathscr G$, and that $\mathscr A$-valued inner
product values of elements of $\mathscr E$ embedded in $\mathscr G$
are preserved with respect to the $\mathscr{A}^{**}$-valued inner product on
$\mathscr G$ and to the canonical isometric embedding $\pi$ of $\mathscr A$
into its bidual Banach space ${\mathscr A}^{**}$. Every bounded $\mathscr A$-linear
operator $T$ on $\mathscr E$ can be extended to a unique bounded
$\mathscr{A}^{**}$-linear operator on $\mathscr G$ preserving the operator
norm, cf. \cite[Proposition 3.6, Corollary 3.7, and {\S}4]{Pa1973}. The extension
of bounded $\mathscr A$-linear operators from $\mathscr E$ to $\mathscr G$
is continuous with respect to the $w^*$-topology on ${\mathscr G}$.
A Hilbert $C^*$-module $\mathscr K$ over a $W^*$-algebra $\mathscr B$ is self-dual,
if and only if its unit ball is complete with respect to the topology
induced by the semi-norms $\{ |f(\langle x,\cdot\rangle)| : x \in
\mathscr K, f \in {\mathscr B}^*, \|x\| \leq 1, \|f\| \leq 1 \}$,
if and only if its unit ball is complete with respect to the topology
induced by the semi-norms $\{ {f(\langle \cdot,\cdot \rangle)}^{1/2} :
f \in {\mathscr B}^*, \|f\| \leq 1\}$. The first topology coincides
with the $w^*$-topology on $\mathscr K$ in that case, see \cite[{\S} 4]{Frank:ZAA}.

Note, that in the construction above $\mathscr E$ is always $w^*$-dense
in $\mathscr G$, as well as for each subset of $\mathscr E$ the
respective construction is $w^*$-dense in its biorthogonal
complement with respect to $\mathscr G$. However, starting
with a subset of $\mathscr G$, its biorthogonal complement
with respect to $\mathscr G$ might not have a $w^*$-dense
intersection with the embedding of $\mathscr E$ into $\mathscr G$;
cf. \cite[Proposition 3.11.9]{Ped1979}.

Further, we want to consider only discrete $W^*$-algebras,
i.e. $W^*$-algebras for which the supremum of all minimal projections
contained in them equals their identity. (We prefer to use the word
discrete instead of atomic.) To connect to the general $C^*$-case
we make use of a theorem of Akemann \cite[p. 278]{Ak1969} stating that the
$*$-homomorphism of a $C^*$-algebra ${\mathscr A}$ into the discrete part of
its bidual von Neumann algebra ${\mathscr A}^{**}$ which arises as the
composition of the canonical embedding $\pi$ of ${\mathscr A}$ into ${\mathscr A}^{**}$
followed by the projection to the discrete part of ${\mathscr A}^{**}$
is an injective $*$-homomorphism $\rho$. This injective $*$-homomorphism $\rho$ is
implemented by a central projection $p \in Z({\mathscr A}^{**})$ in such a
way that ${\mathscr A}^{**}$ multiplied by $p$ gives the discrete part of ${\mathscr A}^{**}$.

For topological characterizations of self-duality of Hilbert $C^*$-modules over $W^*$-algebras
and the properties of the modules and operators we refer the reader to
\cite{Pa1973}.

Now, we are in a position to state one of the main results of this section that
generalizes \cite[Theorem 3.2]{Ch.new}.

\begin{theorem} \label{Thm_3.1}
Let ${\mathscr A}$ be a $C^*$-algebra and let ${\mathscr E}$ and ${\mathscr F}$
be two full Hilbert ${\mathscr A}$-modules.
Suppose that $T, S : {\mathscr E} \to {\mathscr F}$ are two nonzero
bounded ${\mathscr A}$-linear operators such that
$$x \perp y\, \Longrightarrow \,T(x) \perp S(y) \qquad (x, y\in \mathscr{E}).$$
Then there exists an element $\gamma$ of the center $Z(M({\mathscr A}))$ of
the multiplier algebra $M({\mathscr A})$ of ${\mathscr A}$ such that
$$\langle T(x), S(y)\rangle = \gamma \langle x, y\rangle \qquad (x, y\in \mathscr{E}).$$
In the complementary case, if $\gamma=0$, the ranges of the operators $T$ and $S$
are orthogonal to each other, and no further information on them can be derived.
\end{theorem}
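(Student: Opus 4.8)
The plan is to reduce the general $C^*$-algebra case to the discrete $W^*$-algebra situation, where Theorem~\ref{cr.212.9} (via standard $C^*$-algebras) applies, and then to transport the resulting scalar $\gamma$ back to the multiplier algebra. First I would pass to the bidual: extend $T$ and $S$ to bounded $\mathscr{A}^{**}$-linear operators $T,S:\mathscr{G}\to\mathscr{G}$ on the self-dual completions, using the machinery recalled before the statement. The orthogonality-preserving implication $x\perp y\Rightarrow T(x)\perp S(y)$ survives this extension because $\mathscr{E}$ is $w^*$-dense in $\mathscr{G}$, inner products are separately $w^*$-continuous on bounded sets, and one can approximate: if $\langle x,y\rangle=0$ in $\mathscr{G}$, write $x,y$ as $w^*$-limits of bounded nets from $\pi'(\mathscr{E})$ and use a polarization/orthogonalization argument to recover orthogonal pairs in $\mathscr{E}$ whose images stay orthogonal in the limit. (This is the step I expect to need the most care; see below.)

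Next I would use Akemann's theorem: let $p\in Z(\mathscr{A}^{**})$ be the central projection onto the discrete part $\mathscr{A}^{**}p$, and let $\rho:\mathscr{A}\hookrightarrow\mathscr{A}^{**}p$ be the induced injective $*$-homomorphism. Compressing $\mathscr{G}$ by $p$ yields a self-dual Hilbert $\mathscr{A}^{**}p$-module $p\mathscr{G}$, and the compressed operators $pTp$, $pSp$ are $\mathscr{A}^{**}p$-linear and still orthogonality-preserving as a pair. Since $\mathscr{A}^{**}p$ is a discrete $W^*$-algebra, it is a $c_0$- or $\ell^\infty$-type direct sum of type~I factors $\mathbb{B}(\mathscr{H}_\iota)$; restricting to each block, $\mathbb{B}(\mathscr{H}_\iota)$ is a standard $C^*$-algebra on $\mathscr{H}_\iota$, so Theorem~\ref{cr.212.9} furnishes a scalar $\gamma_\iota\in\mathbb{C}$ with $\langle pT(x),pS(y)\rangle_\iota=\gamma_\iota\langle x,y\rangle_\iota$ on that block.

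The task is then to glue the $\gamma_\iota$ into a single central multiplier. The collection $(\gamma_\iota)_\iota$ defines an element $\gamma^{**}\in Z(\mathscr{A}^{**}p)$ with $\langle pT(x),pS(y)\rangle=\gamma^{**}\langle \rho(x),\rho(y)\rangle$ for all $x,y\in\mathscr{E}$; I would first check $\gamma^{**}$ is bounded (its norm is controlled by $\|T\|\,\|S\|$, since on each block $|\gamma_\iota|\le\|T\|\,\|S\|$ by the same estimate used in Theorem~\ref{th.212.7}). Because $\rho$ is injective and $\mathscr{E},\mathscr{F}$ are full, $\langle\rho(\mathscr{E}),\rho(\mathscr{E})\rangle$ generates $\rho(\mathscr{A})$, and the identity $\langle T(x),S(y)\rangle = \langle x,y\rangle$-image under $\rho$ shows that multiplication by $\gamma^{**}$ maps $\rho(\mathscr{A})$ into itself; pulling back through $\rho^{-1}$ we obtain a bounded linear map $\mathscr{A}\to\mathscr{A}$ which is a two-sided centralizer (it commutes with left and right multiplications, using $\mathscr{A}$-linearity of $T,S$ on both sides and $\langle ax,yb\rangle=a\langle x,y\rangle b$), hence is given by an element $\gamma\in Z(M(\mathscr{A}))$. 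Unwinding, $\langle T(x),S(y)\rangle=\gamma\langle x,y\rangle$ for all $x,y\in\mathscr{E}$. Centrality of $\gamma$ follows because $\gamma\langle x,y\rangle a = \langle T(x),S(y)\rangle a = \langle T(x),S(ya)\rangle = \langle T(x),S(y)\rangle a$ and symmetrically on the left, together with fullness. Finally, the complementary case is immediate: if $S=0$ then $\langle T(x),S(y)\rangle=0=0\cdot\langle x,y\rangle$ regardless of $T$, and likewise if $T=0$, so $\gamma=0$ works.

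\smallskip
The main obstacle, as flagged, is the bidual extension step: ensuring the pairwise orthogonality-preserving property genuinely lifts from $\mathscr{E}$ to $\mathscr{G}$. The subtlety is that a pair $x,y\in\mathscr{G}$ with $\langle x,y\rangle=0$ need not be a $w^*$-limit of \emph{orthogonal} pairs from $\mathscr{E}$. I would circumvent this by working with the algebraic pre-completion $\mathscr{E}^{\#}=\mathscr{A}^{**}\otimes\mathscr{E}/(\text{isotropic})$ first: for elementary tensors $a\otimes x$, $b\otimes y$ one has $[a\otimes x,b\otimes y]=a^*\langle x,y\rangle b$, so orthogonality there reduces to relations already controlled on $\mathscr{E}$ after multiplying by elements of $\mathscr{A}^{**}$, and the $\mathscr{A}^{**}$-linearity of the extended $T,S$ lets one factor those multipliers out; then pass to the $w^*$-dense image and use separate $w^*$-continuity of the inner product on the bounded unit ball of the self-dual module $\mathscr{G}$ to reach arbitrary $x,y\in\mathscr{G}$. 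This is exactly where ``the proving techniques give new insights'' — the reduction to $\mathscr{E}^{\#}$ before completing to $\mathscr{G}$ is what makes the orthogonality hypothesis usable.
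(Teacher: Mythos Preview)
Your global architecture matches the paper's proof exactly: extend $T,S$ to the self-dual bidual completions, project via Akemann's central projection $p$ onto the discrete part of $\mathscr{A}^{**}$, decompose into type~I factor blocks, apply the standard-$C^*$-algebra result (Theorem~\ref{th.212.7}/\ref{cr.212.9}) blockwise to obtain scalars $\gamma_\iota$, reassemble as a bounded element of $Z(p\mathscr{A}^{**})$, and pull back to $Z(M(\mathscr{A}))$ using fullness. The bound $|\gamma_\iota|\le\|T\|\,\|S\|$ and the multiplier argument you sketch are both what the paper does.

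The one genuine gap is precisely the step you flagged, and your proposed workaround via elementary tensors does not close it. For elementary tensors $a\otimes x$, $b\otimes y\in\mathscr{E}^{\#}$ with $a^*\langle x,y\rangle b=0$, you need $a^*\langle T(x),S(y)\rangle b=0$; but $a^*\langle x,y\rangle b=0$ says nothing about $\langle x,y\rangle$ itself, so you cannot invoke the hypothesis on $T,S$ in $\mathscr{E}$, and factoring $a,b$ back in does not help---this is circular, since the relation between $\langle T(x),S(y)\rangle$ and $\langle x,y\rangle$ is exactly what is being proved. General elements of $\mathscr{E}^{\#}$ are finite sums of such tensors, which only makes this worse.

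The paper circumvents this differently: for each normal state $f$ on $\mathscr{A}^{**}$ it forms the pre-Hilbert spaces $\mathscr{E}_f=\mathscr{E}/\ker f(\langle\cdot,\cdot\rangle)$ and its completion $\mathscr{G}_f$, transfers $T,S$ there as continuous linear maps, and uses norm-density of $\mathscr{E}_f$ in $\mathscr{G}_f$ together with separate weak continuity of the scalar inner product to push the $\perp_f$-implication from $\mathscr{E}_f$ to $\mathscr{G}_f$. Since $f$ was arbitrary among normal states, $f(\langle T(x),S(y)\rangle)=0$ for all such $f$ gives $\langle T(x),S(y)\rangle=0$ in $\mathscr{A}^{**}$. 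This state-by-state reduction to genuine Hilbert spaces is what makes the density argument work; your $w^*$-net idea in the first paragraph is in the right spirit but would need exactly this refinement to succeed.
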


\begin{proof}
First, we make use of the existing canonical non-degenerate isometric $*$-representation
$\pi$ of a $C^*$-algebra ${\mathscr A}$ in its bidual Banach space and von Neumann
algebra ${\mathscr A}^{**}$ of ${\mathscr A}$, as well as of its extension
$\pi': {\mathscr E} \to {\mathscr E}^{\#} \to {\mathscr G}$,
$\pi': {\mathscr F} \to {\mathscr F}^{\#} \to {\mathscr H}$
and of the unique $w^*$-continuous ${\mathscr A}^{**}$-linear bounded
operator extensions $T,S: {\mathscr G} \to {\mathscr H}$. In other words,
we extend the set $\{ {\mathscr A}, {\mathscr E}, {\mathscr F}, T, S \}$
to the set $\{ {\mathscr A}^{**}, {\mathscr G}, {\mathscr H}, T, S \}$. The
Hilbert ${\mathscr A}^{**}$-modules ${\mathscr G}$ and ${\mathscr H}$ are
self-dual and admit a predual Banach space, hence, a $w^*$-topology. The
extended operators $T$ and $S$ are $w^*$-continuous, ${\mathscr A}^{**}$-linear
and bounded by the same constants as the original operators $T$ and $S$.

Secondly, we have to demonstrate that for each pair of elements
$x \perp y$ with $x,y \in {\mathscr G}$ the property $T(x) \perp S(y)$
still holds for the extended operators. In the sequel, we identify $\mathscr E$ with
its image $\pi'({\mathscr E}) \subseteq \mathscr G$. Note, that $\pi'(\mathscr E)$ is $w^*$-dense in $\mathscr G$.
Applying techniques developed in \cite{Pa1973},
let us fix a non-trivial normal state $f \in \mathscr{A}^*$ and form the inner product spaces
${\mathscr E}_f=\{ \mathscr E, f(\langle \cdot,\cdot \rangle) \} / {\ker}(f(\langle \cdot,\cdot \rangle))$ and
${\mathscr G}_f={\rm cl}(\{ \mathscr G, f(\langle \cdot,\cdot \rangle) \} / {\ker}(f(\langle \cdot,\cdot \rangle)))$,
where ${\mathscr E}_f$ is norm-dense in ${\mathscr G}_f$. The operators $T$ and $S$ can
be restricted to this pre-Hilbert space and are still linear and continuous. Now, form the biorthogonal complements
of the two remainder classes $[x]_f$ and $[y]_f$ of $x, y \in{\mathscr G}_f$. Their intersection with
${\mathscr E}_f$ is not empty, and these intersections form norm-dense subsets in them, respectively.
For a pair of elements $t \in [x]_f \cap {\mathscr E}_f$ and $s \in [y]_f \cap {\mathscr E}_f$ we always
have $T(t) \perp_f S(s)$. Since inner products on pre-Hilbert spaces are separately weakly continuous
in each of their two arguments the domain of $T$, for instance, can be extended to ${\mathscr G}_f$
preserving the orthogonality relation to $S({\mathscr E}_f)$. By symmetry, the same is true for $S$.
Therefore, $T([x]_f) \perp_f S([y]_f)$ for all $x,y \in {\mathscr G}_f$ because of the norm-density of
the respective subsets and of the continuity of the operators. Since the normal state $f$ on $\mathscr{A}$
was selected arbitrarily the relation $T(x) \perp S(y)$ follows.

Since the von Neumann algebra $p{\mathscr A}^{**}$ is discrete, its identity
$p$ can be represented as the $w^*$-sum of a maximal set of pairwise
orthogonal atomic projections $\{ q_\alpha : \alpha \in J\}$ of the center
$Z(p{\mathscr A}^{**})$ of $p{\mathscr A}^{**}$. Note, that $w^*$-$\sum_{\alpha
\in J} q_\alpha = p$. Take a single atomic projection $q_\alpha \in
Z(p{\mathscr A}^{**})$ of this collection and consider the part
$\big\{q_\alpha p{\mathscr A}^{**}, q_\alpha p{\mathscr E}, q_\alpha p{\mathscr F}, q_\alpha pT, q_\alpha pS\big\}$
of the problem.

Since $q_\alpha p{\mathscr A}^{**}$ is a discrete (type I) $W^*$-factor (finite-
or infinite-dimensional), the equality $\langle T(x),S(y) \rangle = \lambda_{q_\alpha}
\langle x,y \rangle = \langle x,y \rangle \lambda_{q_\alpha}$ holds for
specific $T$ and $S$ and complex number $\lambda_{q_\alpha}$, cf. Theorem \ref{th.212.7}.

Now, we can follow the decomposition process in reverse direction.
Note, that the multiplier algebra of $p{\mathscr A}$ is $*$-isometrically
embedded in $p{\mathscr A}^{**}$.
Since $|\lambda_{q_\alpha}| \leq \|S\|\,\|T\|$ for all $\alpha \in J$, the
sum $\sum_{\alpha \in J} \lambda_{q_\alpha} q_\alpha$ is $w^*$-convergent
in $Z(p{\mathscr A}^{**})$. Moreover, since $\lambda_{q_\alpha} q_\alpha$ commutes with
$p\langle x,y \rangle$ for each $\alpha$, the sum $\sum_{\alpha \in J}
\lambda_{q_\alpha} q_\alpha$ commutes with $p\langle x,y \rangle$. What is more,
since $p\langle {\mathscr G}, {\mathscr G}\rangle$ is dense in $p{\mathscr A}^{**}$
and the $C^*$-valued inner product $\sum_{\alpha \in J} \lambda_{q_\alpha}
q_\alpha \langle x,y \rangle =
\sum_{\alpha \in J} \langle x,y \rangle \lambda_{q_\alpha} q_\alpha$
belongs to $p{\mathscr A}^{**}$ for all $x,y \in {\mathscr G}$, the element
$\sum_{\alpha \in J} \lambda_{q_\alpha} q_\alpha$ is in $p Z(M({\mathscr A}))$.
Since ${\mathscr E}$ and ${\mathscr F}$ are full Hilbert ${\mathscr A}$-modules,
we arrive at the equality $p\langle T(x),S(y) \rangle= \sum_{\alpha \in J}
\lambda_{q_\alpha} q_\alpha \langle x,y \rangle $ for all $x,y \in {\mathscr G}$.

The remaining step is to pull back this equality to the initial context along
the two injective $*$-homomorphisms used.

Note, that $\gamma = 0$ forces only $T({\mathscr E}) \perp S({\mathscr E})$
without further conditions on $T$ and $S$.
\end{proof}

\begin{example} \label{Ex_extra}
Let $\mathscr{A}=C_0((0,1])$ be the $C^*$-algebra of all continuous functions
vanishing at zero, where $(0,1]$ is the unit interval with the usual metric
topology. Set $\mathscr{E}=\mathscr{F}=\mathscr{A}$ with the usual $\mathscr{A}$-valued
inner product derived from the multiplication and the involution on $\mathscr{A}$. Set
$$S(f(t)) = \sin \left(\frac{1}{t}\right) \cdot f(t) \quad , \quad T(f(t)) = \cos \left(\frac{1}{t}\right) \cdot f(t)$$
for all $f \in \mathscr{A}$ and $t \in (0,1]$. Then ${\rm ran}(S) \not= {\rm ran}(T)$ and they are neither
equal to  $\mathscr{E}$, but ${\rm ran}(S)^{\bot\bot} = {\rm ran}(T)^{\bot\bot} =
\mathscr{E}$. Moreover, $\gamma = \sin \left(\frac{1}{t}\right) \cos \left(\frac{1}{t}\right) \in
M(\mathscr{A})$ is not invertible. Both the operators are bounded, adjointable (and hence
$\mathscr{A}$-linear), injective and orthogonality-preserving, but they are not invertible.
\end{example}
\begin{corollary}\label{Cor_3.2}
Let ${\mathscr A}$ be a $C^*$-algebra and ${\mathscr E}$ be a full
Hilbert ${\mathscr A}$-module. Let $T: {\mathscr E} \to {\mathscr E}$
be a bounded ${\mathscr A}$-linear operator such that $x \perp y$
implies $T(x) \perp y$ for every suitable elements $x, y \in {\mathscr E}$.
Then there exists an element $\gamma \in Z(M({\mathscr A}))$ such that
$T(x)= \gamma x$ for each $x \in {\mathscr E}$.
\end{corollary}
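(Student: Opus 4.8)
The plan is to obtain this as a direct consequence of Theorem \ref{Thm_3.1} by specializing the second operator to the identity, exactly in the spirit of how Corollary \ref{Cor_2.10} was deduced from Theorem \ref{cr.212.9} in the standard-$C^*$-algebra case. First I would set $\mathscr{F} = \mathscr{E}$ and $S = \mathrm{id}_{\mathscr{E}} : \mathscr{E} \to \mathscr{E}$. If $T = 0$ the conclusion holds trivially with $\gamma = 0$, so assume $T \neq 0$. Then $T$ and $S$ are two nonzero bounded $\mathscr{A}$-linear maps between the full Hilbert $\mathscr{A}$-modules $\mathscr{E}$ and $\mathscr{F} = \mathscr{E}$, and the hypothesis ``$x \perp y$ implies $T(x) \perp y$'' is literally the hypothesis ``$x \perp y \Longrightarrow T(x) \perp S(y)$'' of Theorem \ref{Thm_3.1}. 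Hence that theorem supplies an element $\gamma \in Z(M(\mathscr{A}))$ with $\langle T(x), y\rangle = \gamma\langle x, y\rangle$ for all $x, y \in \mathscr{E}$.

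The remaining task is to upgrade this inner-product identity to the pointwise identity $T(x) = \gamma x$. For this I would invoke the standard fact that a full Hilbert $\mathscr{A}$-module $\mathscr{E}$ carries a canonical action of the multiplier algebra $M(\mathscr{A})$ extending the $\mathscr{A}$-action, under which $\gamma x \in \mathscr{E}$ and $\langle \gamma x, y\rangle = \gamma\langle x, y\rangle$ for every $\gamma \in M(\mathscr{A})$ and $x, y \in \mathscr{E}$ (one may realize $\gamma x$ as the norm limit of $\gamma e_\lambda x$ along an approximate unit $(e_\lambda)$ of $\mathscr{A}$; see \cite{Man}). This is precisely the structure that makes the expression ``$T(x) = \gamma x$'' in the statement meaningful. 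With this at hand, for any $x \in \mathscr{E}$ we get $\langle T(x) - \gamma x, y\rangle = \langle T(x), y\rangle - \gamma\langle x, y\rangle = 0$ for all $y \in \mathscr{E}$. Taking $y = T(x) - \gamma x$ yields $\langle T(x) - \gamma x, T(x) - \gamma x\rangle = 0$, so $T(x) = \gamma x$ by positive definiteness of the inner product. Since $x$ was arbitrary, $T(x) = \gamma x$ for all $x \in \mathscr{E}$, as claimed.

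I do not expect a genuine obstacle here: the entire analytic content is absorbed into Theorem \ref{Thm_3.1}, and what remains is bookkeeping. The only point deserving a line of care is the final implication ``$\langle T(x) - \gamma x, y\rangle = 0$ for all $y$ $\Rightarrow$ $T(x) = \gamma x$'', which requires $\gamma x$ to be an honest element of $\mathscr{E}$; fullness of $\mathscr{E}$ is exactly the hypothesis that guarantees the $M(\mathscr{A})$-module structure needed for this, so no extra assumptions are incurred.
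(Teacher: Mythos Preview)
Your proposal is correct and follows essentially the same route as the paper: the authors simply say the proof is identical to that of Corollary~\ref{Cor_2.10}, applying Theorem~\ref{Thm_3.1} with $S=\mathrm{id}$ to obtain $\langle T(x),y\rangle=\gamma\langle x,y\rangle$ and then concluding $T(x)=\gamma x$ from $\langle T(x)-\gamma x,y\rangle=0$. Your added remarks about the trivial case $T=0$ and about the $M(\mathscr{A})$-module structure needed to interpret $\gamma x$ are reasonable elaborations, but they do not alter the argument.
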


The proof is the same as for Corollary \ref{Cor_2.10} changing the origin of
$\gamma$ and the theorem referred to.
\begin{corollary}
Let $\mathscr A$ be a $C^*$-algebra and $\mathscr E$ be a full Hilbert $\mathscr A$-module.
Let $U,V : \mathscr E \to \mathscr E$ be isometries with $UU^*=VV^*=I$.
Then $x \perp y$ forces $U(x) \perp V(y)$ for every suitable pair $x,y \in \mathscr E$
if and only if either $U(\mathscr E) \perp V(\mathscr E)$ or $U(x)=\gamma V(x)$
for a fixed unitary element $\gamma \in Z(M({\mathscr A}))$ and every $x \in \mathscr E$.
\end{corollary}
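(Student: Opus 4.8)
My plan is to obtain the corollary as a direct consequence of Theorem~\ref{Thm_3.1}. First I would dispose of the degenerate case: if $\mathscr{E}=\{0\}$ then $U(\mathscr{E})=\{0\}=V(\mathscr{E})$ and the first alternative holds, so I may assume $\mathscr{E}\neq\{0\}$. Next I would observe that each of $U,V$ is in fact a unitary: since $U$ preserves inner products and is adjointable with $UU^{*}=id$, the identity $\langle x,y\rangle=\langle Ux,Uy\rangle=\langle x,U^{*}Uy\rangle$ for all $x$ gives $U^{*}U=id$, and likewise for $V$; in particular $U,V$ are nonzero bounded $\mathscr{A}$-linear surjections of the full module $\mathscr{E}$ onto itself. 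I would also recall the standard facts that on a full Hilbert $\mathscr{A}$-module the left $\mathscr{A}$-action extends canonically to $M(\mathscr{A})$, that adjointable operators are $M(\mathscr{A})$-linear, and that $\langle\gamma\xi,\eta\rangle=\gamma\langle\xi,\eta\rangle$ and $\langle\xi,\gamma\eta\rangle=\langle\xi,\eta\rangle\gamma^{*}$ for $\gamma\in M(\mathscr{A})$; these make all the computations below legitimate.

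The \emph{if} direction is immediate: if $U(\mathscr{E})\perp V(\mathscr{E})$ then $\langle U(x),V(y)\rangle=0$ for all $x,y$, while if $U(x)=\gamma V(x)$ with $\gamma\in Z(M(\mathscr{A}))$ then $\langle U(x),V(y)\rangle=\gamma\langle V(x),V(y)\rangle=\gamma\langle x,y\rangle$; so in either case $\langle x,y\rangle=0$ forces $\langle U(x),V(y)\rangle=0$, and unitarity of $\gamma$ is not even needed here.

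For the \emph{only if} direction, assuming $x\perp y\Rightarrow U(x)\perp V(y)$, I would apply Theorem~\ref{Thm_3.1} to the pair $(U,V)$ to get $\gamma\in Z(M(\mathscr{A}))$ with $\langle U(x),V(y)\rangle=\gamma\langle x,y\rangle$ for all $x,y$. Then $\langle U(x)-\gamma V(x),V(y)\rangle=\gamma\langle x,y\rangle-\gamma\langle V(x),V(y)\rangle=0$ for every $y$; since $V$ is surjective the vectors $V(y)$ exhaust $\mathscr{E}$, so $U(x)-\gamma V(x)$ is orthogonal to all of $\mathscr{E}$ and hence vanishes, giving $U(x)=\gamma V(x)$ for all $x$, which is the claimed identity. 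To see $\gamma$ is unitary I would compute, using that $U$ is inner-product preserving and $\gamma^{*}$ is central, $\langle x,y\rangle=\langle U(x),U(y)\rangle=\langle\gamma V(x),\gamma V(y)\rangle=\gamma\langle x,y\rangle\gamma^{*}=\gamma\gamma^{*}\langle x,y\rangle$, so $(\gamma\gamma^{*}-1)\langle\mathscr{E},\mathscr{E}\rangle=0$ and fullness forces $\gamma\gamma^{*}=1$ in $M(\mathscr{A})$. Because the hypothesis is symmetric in $U$ and $V$ ($\langle U(x),V(y)\rangle=0\iff\langle V(y),U(x)\rangle=0$ and $x\perp y\iff y\perp x$), the same reasoning applied to $(V,U)$ yields $\delta\in Z(M(\mathscr{A}))$ with $V(x)=\delta U(x)$; substituting gives $U=\gamma V=\gamma\delta\,U$ and $V=\delta U=\delta\gamma\,V$, whence surjectivity of $U$ and $V$ together with fullness force $\gamma\delta=\delta\gamma=1$. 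Combining $\gamma\gamma^{*}=1$ with $\delta\gamma=1$ gives $\gamma^{*}=\delta=\gamma^{-1}$, so $\gamma$ is a unitary element of $Z(M(\mathscr{A}))$.

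Thus the substantive content lies entirely in Theorem~\ref{Thm_3.1}, and everything else is routine; the only place I expect to need care is the passage from $\gamma\gamma^{*}=1$ to full unitarity of $\gamma$ — rather than attempting to invert $\gamma$ directly, I would exploit the symmetry of the hypothesis and run the orthogonality argument a second time for the reversed pair $(V,U)$.
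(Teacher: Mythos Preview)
Your proof is correct and follows essentially the same route as the paper's: both reduce the claim to Theorem~\ref{Thm_3.1} (the paper by passing to $V^*U$ and invoking Corollary~\ref{Cor_3.2}, you by applying the theorem to the pair $(U,V)$ directly and then using surjectivity of $V$). Your explicit verification that $\gamma$ is unitary---first obtaining $\gamma\gamma^*=1$ from the isometry of $U$, then running the argument for the reversed pair to get a two-sided inverse---spells out what the paper compresses into the phrase ``$\gamma$ has to be unitary since we are treating co-isometries,'' but the substance is the same.
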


\begin{proof}
Let $x,y \in \mathscr E$ be a pair of orthogonal elements.
Then $\langle V^*U(x),y \rangle = \langle U(x),V(y) \rangle = 0$ by the assumption.
Applying Corollary \ref{Cor_3.2} we obtain either $V^*U=0$ or $V^*U(x)=\gamma x$
for a certain non-zero $\gamma \in Z(M({\mathscr A}))$,
i.e. $U=\gamma V$, and $\gamma$ has to be unitary since we are treating co-isometries.
\end{proof}

Geometrically this means that two isometric copies of a Hilbert $\mathscr A$-module $\mathscr E$
embedded into $\mathscr E$ as orthogonal direct summands additionally preserve orthogonality of
the different images of each pair of initially orthogonal elements of $\mathscr E$, if and only
if either these two images of $\mathscr E$ are orthogonal to each other as $\mathscr A$-submodules
or these two isometric embeddings as orthogonal direct summands only differ by multiplication
by a unitary from $Z(M({\mathscr A}))$, i.e. coincide as $\mathscr A$-modules.

The next theorem generalizes \cite[Theorem 3]{F.M.P} and \cite[Theorem 2.3]{L.N.W.1},
and gives a partial solution of \cite[Problem 1]{F.M.P}. It extends results of \cite{Ch.new}.

\begin{theorem} \label{Thm_3.2}
Let ${\mathscr A}$ be a $C^*$-algebra and ${\mathscr E}$ be a full
Hilbert ${\mathscr A}$-module. Let $T: {\mathscr E} \to {\mathscr E}$
be a linear operator and let $S: {\mathscr E} \to {\mathscr E}$ be
an invertible linear operator with bounded inverse operator.
Suppose, $\langle T(x), S(y) \rangle = \gamma \langle x,y \rangle$
for some invertible element $\gamma \in Z(M({\mathscr A}))$.
Then $T$ and $S$ are bounded, ${\mathscr A}$-linear, adjointable, invertible,
and $ST^*(x)= \gamma x$ and $S^*T(x) = \gamma^* x$
for all $x \in {\mathscr E}$, i.e. the pairs of operators $\{S, T^* \}$
and $\{S^*, T \}$ commute, and also $S=\gamma^{-1} {(T^*)}^{-1}$
and $T = \gamma^*{(S^*)}^{-1}$.
\newline
In the special situation of $T=S$ the element $\gamma$ is positive
and $T=\sqrt{\gamma}U$ for some unitary ${\mathscr A}$-linear
operator $U$ on ${\mathscr E}$.
\end{theorem}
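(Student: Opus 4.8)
The plan is to exploit the hypothesis $\langle T(x), S(y)\rangle = \gamma \langle x,y\rangle$ together with the invertibility of $\gamma$ and of $S$ to bootstrap regularity of the unknown map $T$, and then to derive the stated algebraic identities by a symmetry argument. First I would establish that $T$ is bounded: for fixed $x$, the functional $y \mapsto \langle T(x), S(y)\rangle = \gamma\langle x, y\rangle$ is bounded in $y$, and since $S$ is invertible with bounded inverse, $S$ is surjective, so $z \mapsto \langle T(x), z\rangle$ is bounded on $\mathscr E$, whence $\|T(x)\| = \sup_{\|z\|\le 1}\|\langle T(x), z\rangle\| \le \|\gamma\|\,\|x\|\,\|S^{-1}\|$; a uniform-boundedness-free estimate gives $\|T\| \le \|\gamma\|\,\|S^{-1}\|$ directly. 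Next, $\mathscr A$-linearity of $T$: from $\langle T(ax), S(y)\rangle = \gamma\langle ax, y\rangle = a\gamma\langle x,y\rangle = a\langle T(x), S(y)\rangle = \langle aT(x), S(y)\rangle$ for all $y$, surjectivity of $S$ forces $\langle T(ax) - aT(x), z\rangle = 0$ for all $z$, hence $T(ax) = aT(x)$. So $T$ is a bounded $\mathscr A$-linear operator.

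Then I would show $T$ is adjointable with $T^* = S\gamma^{-1} = \gamma^{-1}S$ (using $\gamma$ central): indeed $\langle T(x), y\rangle = \langle T(x), SS^{-1}y\rangle = \gamma\langle x, S^{-1}y\rangle = \langle x, \gamma^* S^{-1} y\rangle$... here one must be slightly careful — rewrite instead as $\langle T(x), S(w)\rangle = \gamma\langle x, w\rangle = \langle x, \gamma^* w\rangle$, so $\langle T(x), S(w)\rangle = \langle x, \gamma^* w\rangle$ for all $x, w$; since $S$ is onto this says $T$ has an adjoint given by $T^*(y) = \gamma^* S^{-1}(y)$. In particular $T^*$ is bounded and invertible (composition of invertibles), hence $T$ is invertible and adjointable. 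Being adjointable, $S = T^{*-1}\gamma$ is also adjointable and $\mathscr A$-linear, and bounded with bounded inverse. From $T^* = \gamma^* S^{-1}$ one reads off $ST^*(x) = S\gamma^* S^{-1}(x)$; to get $ST^*(x) = \gamma x$ one needs $\gamma$ central in $M(\mathscr A)$ acting as a module multiplier that commutes with $S$ — this holds because $\gamma \in Z(M(\mathscr A))$ acts by a central multiplier and $S$ is $\mathscr A$-linear, so $S(\gamma^* S^{-1}x) = \gamma^* x$; wait, that gives $ST^* = \gamma^*$, not $\gamma$. I would recheck the adjoint computation: from $\langle S(y), T(x)\rangle = \overline{\gamma}^*\dots$; more cleanly, take adjoints of $\langle T(x), S(y)\rangle = \gamma\langle x,y\rangle$ to get $\langle S(y), T(x)\rangle = \gamma^*\langle y, x\rangle$, i.e. $\langle y, S^*T(x)\rangle = \langle y, \gamma^* x\rangle$ for all $y$, hence $S^*T(x) = \gamma^* x$; symmetrically (swapping the roles via the original identity read as $\langle T(x), S(y)\rangle = \langle x, \gamma^* y\rangle$... ) one gets $T^*S(y) = \gamma y$ once $S$ is known adjointable, and then $ST^* = (T S^*)^* $, leading to $ST^*(x) = \gamma x$. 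The inversions $S = \gamma^{-1}(T^*)^{-1}$ and $T = \gamma^*(S^*)^{-1}$ then follow by applying $(\cdot)^{-1}$ and $(\cdot)^*$ to these commutation identities, using centrality and invertibility of $\gamma$ throughout.

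Finally, the special case $T = S$: the identity becomes $\langle T(x), T(y)\rangle = \gamma\langle x,y\rangle$, so for $x = y$ we get $\langle T(x), T(x)\rangle = \gamma|x|^2 \ge 0$; choosing $x$ with $|x|^2$ ranging over a large enough set (using fullness of $\mathscr E$, so that $\{|x|^2 : x \in \mathscr E\}$ generates a dense hereditary cone in $\mathscr A_+$) forces $\gamma \ge 0$ in $M(\mathscr A)$; since $\gamma$ is also invertible, $\sqrt{\gamma}$ exists in $Z(M(\mathscr A))$ and is invertible positive. Set $U = \gamma^{-1/2}T = T\gamma^{-1/2}$; then $\langle U(x), U(y)\rangle = \gamma^{-1/2}\gamma^{-1/2}\langle T(x),T(y)\rangle = \gamma^{-1}\gamma\langle x,y\rangle = \langle x,y\rangle$, so $U$ is an isometry, and $U$ is invertible (product of invertibles), hence a unitary $\mathscr A$-linear operator, giving $T = \sqrt{\gamma}\,U$.

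The main obstacle I anticipate is getting the adjoint/commutation bookkeeping exactly right — the placement of $\gamma$ versus $\gamma^*$ and verifying that $\gamma$, qua central multiplier of $\mathscr A$, genuinely commutes with the module operators $S, T$ after passing to the extended picture (or directly via $\mathscr A$-linearity). Once $T$ is known to be adjointable everything else is formal manipulation in the $C^*$-algebra $\mathbb B(\mathscr E)$ of adjointable operators; the real content is the initial bootstrap from the single scalar-type identity plus invertibility of $S$ and $\gamma$ to boundedness, $\mathscr A$-linearity, and adjointability of $T$.
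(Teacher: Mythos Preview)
Your approach is essentially the same as the paper's: the key step in both is to substitute $z = S(y)$, using the bounded invertibility of $S$ to obtain $\langle T(x), z\rangle = \gamma\langle x, S^{-1}(z)\rangle$, which exhibits $T$ as adjointable with $T^* = \gamma^* S^{-1}$ (the paper writes $\gamma S^{-1}$, a harmless discrepancy given centrality and the subsequent manipulations), and then everything else is formal algebra in the adjointable operators together with centrality of $\gamma$. The only difference is one of economy: the paper observes immediately that adjointability of $T$ yields boundedness and $\mathscr{A}$-linearity for free (a standard fact for Hilbert $C^*$-modules), whereas you establish boundedness and $\mathscr{A}$-linearity of $T$ by separate direct arguments before turning to adjointability --- correct, but redundant once you know $T$ is adjointable.
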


\begin{remark}
In the case where the Hilbert ${\mathscr A}$-module ${\mathscr E}$ admits some invertible
bounded ${\mathscr A}$-linear operator $S$ that is not adjointable,
it can not fulfil the equality $\langle T(x), S(y) \rangle = \gamma \langle x,y \rangle$ for any
bounded ${\mathscr A}$-linear operator $T$ on ${\mathscr E}$ and any $\gamma \in Z(M({\mathscr A}))$;
see  \cite[Example 6.2]{Brown} and \cite[Example 7.3]{FrankPositivity}.
\end{remark}

\begin{proof}
Since $S$ is boundedly invertible, we derive the equality $\gamma \langle x, S^{-1}(z)
\rangle = \langle T(x), z \rangle$ for every $x,z \in {\mathscr E}$. By
the boundedness of $S^{-1}$, the operator $T$ is adjointable, bounded and $T^*=\gamma S^{-1}$.
Since adjointable linear operators on Hilbert ${\mathscr A}$-modules are
${\mathscr A}$-linear and bounded, both $T$ and $S$ have to be
${\mathscr A}$-linear, bounded, invertible with bounded inverses and adjointable.
This proves the last assertion.
So $TT^{-1}={\gamma^*}^{\,-1}TS^*=I$ and
$T^{-1}T={\gamma^*}^{\,-1}S^*T=I$.
We arrive at $TS^* = \gamma^*I$ and $S^*T=\gamma^*I$.
We obtain the commutation result.

If, in particular, $T=S$ in our initial equality we derive $TT^*=\gamma I$ and
$T^*T = \gamma^*I$. Consequently, $\gamma$ is positive and
$T = \sqrt{\gamma} U$ for some ${\mathscr A}$-linear
unitary operator $U$ on ${\mathscr E}$. This shows the last assertion.
\end{proof}

\begin{remark}
Checking the conditions on suitable bijective operators $T$ on Hilbert $C^*$-modules
fulfilling the conditions of Theorem \ref{Thm_3.2} one recognizes that every bounded adjointable
invertible operator $T$ together with the operator $S=\gamma^{-1} (T^*)^{-1}$ satisfies the
equality $\langle T(x), S(y) \rangle = \gamma \langle x,y \rangle$ for all elements
$x$ and $y$. The adjointability of $T$ is necessary. In the particular case of $T$ being unitary
and $\gamma = 1$, the operator $S$ has to be unitary, too, and $T=S$. Moreover, for a
given operator $T$, the operator $S$ is unique.
Comparing these observations with the results in \cite{Ch.new} in the setting of Hilbert spaces,
the case of non-surjective operators $T$ and of non-injective elements $\gamma$ have to be
investigated in more details. The (norm-closure of the) range of $T$ might not be an orthogonal
summand, in particular. So more various situations will appear, see Section 6.
\end{remark}

\begin{corollary}
Let $\mathscr A$ be a $C^*$-algebra and let $\langle \cdot,\cdot \rangle_2$ be another
$\mathscr A$-valued inner product on a full Hilbert $\mathscr A$-module $\{ {\mathscr E},
{\langle \cdot,\cdot \rangle}_1 \}$ inducing an equivalent norm to the given one.
Suppose, ${\langle x,y \rangle}_1=0$ implies ${\langle x,y \rangle}_2=0$ for
every suitable $x,y \in {\mathscr E}$. Then there exists an invertible positive
element $\gamma \in Z(M({\mathscr A}))$ such that ${\langle x,y \rangle}_1 =
\gamma {\langle x,y \rangle}_2$ for all $x,y \in {\mathscr E}$.
\end{corollary}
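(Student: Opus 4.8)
The plan is to apply Theorem \ref{Thm_3.2} with the roles of the two operators both played by the identity map on ${\mathscr E}$, but with respect to the two different inner product structures. More precisely, regard $({\mathscr E}, {\langle .,. \rangle}_1)$ as the domain module and set $T = S = \mathrm{id}$, viewed as maps into $({\mathscr E}, {\langle .,. \rangle}_2)$. The crucial point to establish first is that the hypothesis ``${\langle x,y \rangle}_1 = 0 \Rightarrow {\langle x,y \rangle}_2 = 0$'' is precisely the orthogonality-preserving condition $x \perp_1 y \Rightarrow T(x) \perp_2 S(y)$, and then to recast it in the form required by Theorem \ref{Thm_3.2}, namely as an equality ${\langle T(x), S(y) \rangle}_2 = \gamma {\langle x, y \rangle}_1$ for a suitable invertible central $\gamma$.

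The first step is therefore to produce this $\gamma$. Since ${\langle .,. \rangle}_2$ induces a norm equivalent to the ${\langle .,. \rangle}_1$-norm, the identity map $\mathrm{id}: ({\mathscr E}, {\langle .,. \rangle}_1) \to ({\mathscr E}, {\langle .,. \rangle}_2)$ is a bounded ${\mathscr A}$-linear bijection with bounded inverse. Both modules are full over the same ${\mathscr A}$ (fullness of $({\mathscr E}, {\langle .,. \rangle}_2)$ follows from norm-equivalence, or can be checked directly). Hence Theorem \ref{Thm_3.1} applies to the pair $\{\mathrm{id}, \mathrm{id}\}$: from $x \perp_1 y \Rightarrow x \perp_2 y$ we obtain an element $\gamma \in Z(M({\mathscr A}))$ with ${\langle x, y \rangle}_2 = \gamma {\langle x, y \rangle}_1$ for all $x, y \in {\mathscr E}$. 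Because the norm induced by ${\langle .,. \rangle}_2$ is \emph{equivalent} to that induced by ${\langle .,. \rangle}_1$ (not merely dominated by it), we have, for each $x$, a two-sided estimate $c\|{\langle x,x \rangle}_1\| \leq \|\gamma {\langle x,x \rangle}_1\| \leq C\|{\langle x,x \rangle}_1\|$; running over $x$ with ${\langle {\mathscr E}, {\mathscr E} \rangle}_1 = {\mathscr A}$, this forces $\gamma$ to be invertible in $M({\mathscr A})$. Positivity of $\gamma$ is immediate: ${\langle x, x \rangle}_2 = \gamma {\langle x, x \rangle}_1 \geq 0$ for every $x$, and since ${\langle x, x \rangle}_1$ ranges over a set with dense span equal to the positive cone's span in ${\mathscr A}$, $\gamma$ multiplies all positive elements of ${\mathscr A}$ to positive elements, hence $\gamma \geq 0$; combined with invertibility, $\gamma$ is a positive invertible central multiplier.

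The second step is simply to rewrite the conclusion: from ${\langle x, y \rangle}_2 = \gamma {\langle x, y \rangle}_1$ and the invertibility of $\gamma$ we get ${\langle x, y \rangle}_1 = \gamma^{-1} {\langle x, y \rangle}_2$, and $\gamma^{-1}$ is again a positive invertible element of $Z(M({\mathscr A}))$; relabelling $\gamma^{-1}$ as $\gamma$ gives exactly the stated identity ${\langle x,y \rangle}_1 = \gamma {\langle x,y \rangle}_2$. Alternatively — and this is the route that most directly invokes Theorem \ref{Thm_3.2} — one observes that once ${\langle x,y \rangle}_2 = \gamma_0 {\langle x,y \rangle}_1$ holds for an invertible $\gamma_0 \in Z(M({\mathscr A}))$, the full conclusion of Theorem \ref{Thm_3.2} applies to $T = S = \mathrm{id}$ (linear, and invertible with bounded inverse by norm-equivalence), recovering adjointability and all structural consequences; but for the present corollary only the inner-product identity is needed. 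The only mild obstacle is the verification that $\gamma$ is two-sidedly invertible rather than merely nonzero: this is where the equivalence of norms (as opposed to a one-sided inequality) is genuinely used, and it is the reason the corollary asserts invertibility and positivity of $\gamma$ rather than just membership in $Z(M({\mathscr A}))$.
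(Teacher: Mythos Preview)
Your approach is correct and essentially the same as the paper's: set $T = S = \mathrm{id}$ as bounded $\mathscr A$-linear maps from $({\mathscr E}, {\langle .,.\rangle}_1)$ to $({\mathscr E}, {\langle .,.\rangle}_2)$ and apply Theorem~\ref{Thm_3.1}. You are in fact more thorough than the paper's own proof (which simply cites Theorem~\ref{Thm_3.1} and Corollary~\ref{Cor_3.2} and stops), since you explicitly derive the invertibility of $\gamma$ from the two-sided norm equivalence and its positivity from $\gamma{\langle x,x\rangle}_1 = {\langle x,x\rangle}_2 \geq 0$ together with fullness; the initial framing around Theorem~\ref{Thm_3.2} is unnecessary but harmless.
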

\begin{proof}
Set ${\mathscr F}={\mathscr E}$ as an $\mathscr A$-module, and add
the alternative $\mathscr A$-valued inner product ${\langle \cdot,\cdot \rangle}_2$.
Set $T=S=I$ and note, that both of these operators are bounded if
considered as operators on $\mathscr E$. Then
Theorem \ref{Thm_3.1} and Corollary \ref{Cor_3.2} yield
${\langle x,y \rangle}_1 = \gamma {\langle x,y \rangle}_2$ for all
$x,y \in {\mathscr E}$.
\end{proof}

\section{Results in $C^*$-algebra of real rank zero}
Recall that a $C^{*}$-algebra $\mathscr{A}$ has real rank zero if every self-adjoint
element in $\mathscr{A}$ can be approximated in norm by invertible self-adjoint elements.
Note that if $\mathscr{A}$ has real rank zero, then the $*$-algebra generated by all
the idempotents in $\mathscr{A}$ is dense in $\mathscr{A}$; see, for example, \cite{B.P}.
The result extends \cite[Theorem 2.3]{L.N.W.1}.

\begin{theorem}\label{th.213.70}
Let $\mathscr{A}$ be a $C^*$-algebra of real rank zero and with identity $e$, and let $\mathscr{E}$
and $\mathscr{F}$ be Hilbert $\mathscr{A}$-modules. Suppose that $\mathscr{A}$-linear operators
$T, S:\mathscr{E}\to \mathscr{F}$ satisfy the condition
\begin{align*}
x\perp y\, \Longrightarrow \,T(x)\perp S(y) \qquad (x, y\in \mathscr{E}).
\end{align*}
Suppose that there is $z\in \mathscr{E}$ with $\langle z, z\rangle$ being invertible
and $\langle T(z), S(z)\rangle$ is self-adjoint. Then, there exits a self-adjoint element
$\gamma$ in the center $Z(\mathscr{A})$ of $\mathscr{A}$ such that
$$\langle T(x), S(y)\rangle = \gamma\langle x, y\rangle \qquad (x, y\in \mathscr{E}).$$
\end{theorem}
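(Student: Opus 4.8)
The plan is to reduce to the standard case already handled in Section 3, exploiting the real rank zero hypothesis to produce enough ``projections'' along which the argument of Theorem \ref{Thm_3.1} can be localized, and then to use the invertibility of $\langle z,z\rangle$ together with the self-adjointness of $\langle T(z),S(z)\rangle$ to pin down $\gamma$ inside $Z(\mathscr{A})$ rather than merely in $Z(M(\mathscr{A}))$.

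First I would observe that the image submodules $\overline{T(\mathscr E)}$ and $\overline{S(\mathscr E)}$ need not be full, so one cannot directly invoke Theorem \ref{Thm_3.1}; instead I would pass to the $C^*$-subalgebra $\mathscr B = \langle \mathscr E,\mathscr E\rangle$-closure and note that, since $\mathscr E$ is a Hilbert $\mathscr A$-module and $e\in\mathscr A$ is a unit, we may as well assume $\mathscr E$ is full (otherwise replace $\mathscr A$ by $\langle\mathscr E,\mathscr E\rangle$ and check that real rank zero and the existence of the unit-like element $\langle z,z\rangle$ survive the restriction). Then Theorem \ref{Thm_3.1} already yields an element $\gamma \in Z(M(\mathscr A)) = Z(\mathscr A)$ (the latter equality because $\mathscr A$ is unital) with $\langle T(x),S(y)\rangle = \gamma\langle x,y\rangle$ for all $x,y$. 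So the only genuinely new content is: (a) the conclusion $\gamma$ is self-adjoint, and (b) the role of real rank zero, which must be what lets one weaken the hypothesis ``$T,S$ bounded'' to merely ``$T,S$ $\mathscr A$-linear'' (not a priori bounded) — this is the analogue of the passage from Theorem \ref{th.212.7} to Theorem \ref{cr.212.8}, and of \cite[Theorem~2.3]{L.N.W.1}.

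For the boundedness/well-definedness part I would argue as follows: in a real-rank-zero $C^*$-algebra the $*$-algebra generated by idempotents is norm-dense, so for each idempotent (equivalently, after a similarity, each projection) $p\in\mathscr A$ one uses $\mathscr A$-linearity to get $T(px) = pT(x)$ and $S(py)=pS(y)$ on the complemented submodules $p\mathscr E$; restricting to $p\mathscr E$ (a Hilbert $p\mathscr A p$-module, with $p\mathscr A p$ still a well-behaved $C^*$-algebra) and using that orthogonality is preserved there, one extracts a scalar relation $\langle T(px),S(px)\rangle = \gamma_p\,|px|^2$ with $|\gamma_p|$ controlled — the point is that the orthogonality-preserving hypothesis forces an automatic norm bound on $T,S$ restricted to each such piece, exactly as in \cite[Theorem~2.3]{L.N.W.1}. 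Patching the $\gamma_p$ together over a dense family of projections (they must all agree by an argument like the $\gamma_e=\gamma_f$ step in the proof of Theorem \ref{th.212.7}, using minimal/comparable projections and the centrality forced by density) gives a single $\gamma\in Z(\mathscr A)$, and then density of the idempotent-generated $*$-algebra upgrades $\langle T(x),S(x)\rangle = \gamma|x|^2$ from that dense family of $x$'s (those in $p\mathscr E$ for various $p$) to all of $\mathscr E$. Polarization finishes the key equality.

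Finally, for self-adjointness of $\gamma$: apply the equality to $x=y=z$ to get $\langle T(z),S(z)\rangle = \gamma\langle z,z\rangle$. Since $\langle z,z\rangle$ is invertible and positive (hence $\langle z,z\rangle = \langle z,z\rangle^*$), and $\langle T(z),S(z)\rangle$ is self-adjoint by hypothesis, we get $\gamma = \langle T(z),S(z)\rangle\langle z,z\rangle^{-1}$; because $\gamma$ is central it commutes with $\langle z,z\rangle^{-1}$, so $\gamma^* = \langle z,z\rangle^{-1}\langle T(z),S(z)\rangle = \langle T(z),S(z)\rangle\langle z,z\rangle^{-1} = \gamma$, giving $\gamma=\gamma^*\in Z(\mathscr A)$. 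I expect the main obstacle to be step (b): cleanly showing that real rank zero alone (without a boundedness assumption) is enough to force the local scalars $\gamma_p$ to be uniformly bounded and mutually consistent, so that the unbounded $\mathscr A$-linear maps $T,S$ are in fact governed by a single central element on a norm-dense submodule; this is where one has to be careful that $p\mathscr A p$ need not be standard, so the reduction to Theorem \ref{th.212.7} must instead go through the factor decomposition used in Theorem \ref{Thm_3.1}, applied piece by piece.
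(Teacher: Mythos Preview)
Your proposal has a genuine gap at its core. You correctly identify that Theorem~\ref{Thm_3.1} cannot be invoked directly because $T,S$ are not assumed bounded, and you propose to repair this by restricting to corners $p\mathscr{E}$ for projections $p$ and patching local constants $\gamma_p$. But this plan has no working base case: $p\mathscr{A}p$ is again merely a unital real-rank-zero $C^*$-algebra, not a standard algebra and not a type~I factor, so neither Theorem~\ref{th.212.7} nor the factor-by-factor reduction inside Theorem~\ref{Thm_3.1} produces a \emph{scalar} $\gamma_p$ on the corner. You would be invoking exactly the theorem you are trying to prove. The uniform bound you need on the family $\{\gamma_p\}$ is also unjustified: in the single-map case $T=S$ the constant is positive and one can exploit positivity to control norms (this is roughly how \cite{L.N.W.1} proceeds), but with two distinct maps there is no a priori sign and no mechanism for a bound. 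You yourself flag step~(b) as the main obstacle; the point is that it is not merely a technical obstacle but a circularity.

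The paper's argument is entirely different and much more elementary: it never touches boundedness, corners, or the bidual machinery of Section~3. After normalizing so that $\langle z,z\rangle=e$, one sets $\gamma:=\langle T(z),S(z)\rangle$ directly (self-adjoint by hypothesis) and observes that for every symmetry $u\in\mathscr{A}$ one has $(z+uz)\perp(z-uz)$; applying the orthogonality-preserving hypothesis and taking adjoints gives $u\gamma=\gamma u$. Since a real-rank-zero unital algebra is generated by its projections, hence by symmetries $u=2p-e$, this forces $\gamma\in Z(\mathscr{A})$ immediately --- this is the \emph{only} place real rank zero is used. For a general $x$ one sets $w:=x-\langle x,z\rangle z$, so that $w\perp z$ and $z\perp w$, and then uses the further orthogonality $(w+|w|z)\perp(w-|w|z)$: expanding $\big\langle T(w+|w|z),\,S(w-|w|z)\big\rangle=0$ and discarding the cross-terms already known to vanish gives $\langle T(w),S(w)\rangle=|w|\,\gamma\,|w|=\gamma|w|^2$ by centrality of $\gamma$. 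Reassembling $x=w+\langle x,z\rangle z$ then yields $\langle T(x),S(x)\rangle=\gamma|x|^2$, and polarization finishes. The two ideas you are missing are the symmetry trick for centrality and the $(w\pm|w|z)$ orthogonality for the main computation; both are purely algebraic and need nothing beyond $\mathscr{A}$-linearity of $T$ and $S$. Your final paragraph on self-adjointness of $\gamma$ is correct but unnecessary in the paper's route, since $\gamma$ is \emph{defined} as the self-adjoint element $\langle T(z),S(z)\rangle$ from the outset.
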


\begin{proof}
By replacing $z$ with $|z|^{-1}z$, we assume $\langle z, z\rangle = e$.
For every symmetry (i.e. a self-adjoint unitary) $u\in \mathscr{A}$, we have
\begin{align*}
\langle z + uz, z - uz\rangle = |z|^2 + u|z|^2 - u|z|^2 - u|z|^2u = 0
\end{align*}
whence, $z + uz \perp z - uz$. Hence our assumption yields $T(z + uz)\perp S(z - uz)$, or equivalently
\begin{align*}
\langle T(z), S(z)\rangle + u\langle T(z), S(z)\rangle - \langle T(z), S(z)\rangle u - u\langle T(z), S(z)\rangle u = 0.
\end{align*}
Now, let $\gamma : = \langle T(z), S(z)\rangle$. So, the above equality becomes $\gamma + u\gamma - \gamma u - u\gamma u = 0$.
Since $\gamma$ is self-adjoint, by taking adjoint
$\gamma + \gamma u - u\gamma - u\gamma u = 0$. Thus $u \gamma = \gamma u$.
As $\mathscr{A}$ is generated by projections, and thus also by symmetries, we get $\gamma\in Z(\mathscr{A})$.
On the other hand, for each $x\in \mathscr{E}$ we have
$\big\langle z, x - \langle x, z\rangle z\big\rangle = \langle z, x\rangle - |z|^2\langle z, x\rangle = 0$.
Hence
\begin{align}\label{id.212.788901}
z\perp x- \langle x, z\rangle z \qquad \mbox{and} \qquad x- \langle x, z\rangle z\perp z.
\end{align}
So, our assumption yields
\begin{align}\label{id.212.788902}
T(z)\perp S\big(x - \langle x, z\rangle z\big) \qquad \mbox{and} \qquad T\big(x - \langle x, z\rangle z\big) \perp S(z).
\end{align}
Furthermore, from (\ref{id.212.788901}) we infer that
\begingroup\makeatletter\def\f@size{10}\check@mathfonts
\begin{align*}
\Big\langle x - \langle x, z\rangle z &+ \big|x - \langle x, z\rangle z\big|z, x - \langle x, z\rangle z - \big|x - \langle x, z\rangle z\big|z\Big\rangle
\\& = \big|x - \langle x, z\rangle z\big|^2 - \big\langle x - \langle x, z\rangle z, z\big\rangle\big|x - \langle x, z\rangle z\big|
\\& \qquad \qquad \qquad \qquad + \big|x - \langle x, z\rangle z\big|\big\langle z, x - \langle x, z\rangle z\big\rangle - \big|x - \langle x, z\rangle z\big|\langle z, z\rangle\big|x - \langle x, z\rangle z\big|
\\& = \big|x - \langle x, z\rangle z\big|^2 - \big|x - \langle x, z\rangle z\big|^2 = 0.
\end{align*}
\endgroup
Then $x - \langle x, z\rangle z + \big|x - \langle x, z\rangle z\big|z$
is orthogonal to $x - \langle x, z\rangle z - \big|x - \langle x, z\rangle z\big|z$
and hence $T\Big(x - \langle x, z\rangle z + \big|x - \langle x, z\rangle z\big|z\Big)$ is orthogonal to
$S\Big(x - \langle x, z\rangle z - \big|x - \langle x, z\rangle z\big|z\Big)$.
Thus, it follows from (\ref{id.212.788902}) that
\begingroup\makeatletter\def\f@size{10}\check@mathfonts
\begin{align*}
0 & = \Big\langle T\Big(x - \langle x, z\rangle z + \big|x - \langle x, z\rangle z\big|z\Big), S\Big(x - \langle x, z\rangle z - \big|x - \langle x, z\rangle z\big|z\Big)\Big\rangle
\\& = \Big\langle T\big(x - \langle x, z\rangle z\big), S\big(x - \langle x, z\rangle z\big)\Big\rangle -
\Big\langle T\big(x - \langle x, z\rangle z\big), S(z)\Big\rangle\big|x - \langle x, z\rangle z\big|
\\& \qquad \qquad \qquad + \big|x - \langle x, z\rangle z\big|\Big\langle T(z), S\big(x - \langle x, z\rangle z\big)\Big\rangle
-\big|x - \langle x, z\rangle z\big|\langle T(z), S(z)\rangle \big|x - \langle x, z\rangle z\big|
\\& = \Big\langle T\big(x - \langle x, z\rangle z\big), S\big(x - \langle x, z\rangle z\big)\Big\rangle
-\big|x - \langle x, z\rangle z\big|\gamma\big|x - \langle x, z\rangle z\big|.
\end{align*}
\endgroup
Then $\Big\langle T\big(x - \langle x, z\rangle z\big), S\big(x - \langle x, z\rangle z\big)\Big\rangle
= \big|x - \langle x, z\rangle z\big|\gamma\big|x - \langle x, z\rangle z\big|$.
Since $\gamma\in Z(\mathscr{A})$, by (\ref{id.212.788901}), we obtain
\begin{align}\label{id.212.788903}
\Big\langle T\big(x - \langle x, z\rangle z\big), S\big(x - \langle x, z\rangle z\big)\Big\rangle &= \gamma\big|x - \langle x, z\rangle z\big|^2 \nonumber
\\ & = \gamma\Big\langle x - \langle x, z\rangle z, x\Big\rangle - \gamma \Big\langle x - \langle x, z\rangle z, \langle x, z\rangle z\Big\rangle \nonumber
\\ & = \gamma |x|^2 - \gamma |\langle x, z\rangle|^2.
\end{align}
From (\ref{id.212.788902}) and (\ref{id.212.788903}) we infer that
\begingroup\makeatletter\def\f@size{10}\check@mathfonts
\begin{align*}
\langle T(x), S(x)\rangle & = \Big\langle T\big(x - \langle x, z\rangle z\big) + \langle x, z\rangle T(z), S\big(x - \langle x, z\rangle z\big) + \langle x, z\rangle S(z)\Big\rangle
\\& = \Big\langle T\big(x - \langle x, z\rangle z\big), S\big(x - \langle x, z\rangle z\big)\Big\rangle
+ \langle x, z\rangle \Big\langle T(z), S\big(x - \langle x, z\rangle z\big)\Big\rangle
\\& \qquad \qquad \qquad \qquad + \Big\langle T\big(x - \langle x, z\rangle z\big), S(z)\Big\rangle \langle z, x\rangle
+ \langle x, z\rangle \langle T(z), S(z)\rangle \langle z, x\rangle
\\& = \gamma |x|^2 - \gamma |\langle x, z\rangle|^2 + \gamma|\langle x, z\rangle|^2 = \gamma |x|^2.
\end{align*}
\endgroup
Hence
\begin{align}\label{id.212.788904}
\langle T(x), S(x)\rangle = \gamma|x|^2 \qquad (x\in \mathscr{E}).
\end{align}
Now, by (\ref{id.212.788904}) and the polarization identity, we obtain
$$\langle T(x), S(y)\rangle = \gamma\langle x, y\rangle \qquad (x, y\in \mathscr{E}).$$
\end{proof}

\begin{remark}
Notice that orthogonality preserving functions may be very
nonlinear and discontinuous; see \cite[Example 2]{J.C.4} and \cite{Ch.new}.
Now, let $\mathscr{E}$ be a Hilbert $\mathbb{K}(\mathscr{H})$-module
and let $\mathscr{F}$ be a Hilbert $\mathscr{A}$-module.
Let $g, h:\mathscr{E}\to \mathscr{F}$ be additive functions such that
$$x\perp y\, \Longrightarrow \,g(x)\perp h(y) \qquad (x, y\in \mathscr{E}).$$
Suppose that function $f:\mathscr{E}\to \mathscr{A}$
defined by $f(x) : = \langle g(x), h(x)\rangle$ is continuous.
Fix $x, y\in \mathscr{E}$ such that $x\perp y$. Hence $\langle x, y\rangle = \langle y, x\rangle = 0$.
Therefore $\langle g(x), h(y)\rangle = \langle g(y), h(x)\rangle = 0$. Then
\begin{align*}
f(x + y) &= \langle g(x + y), h(x + y)\rangle
\\& = \langle g(x), h(x)\rangle + \langle g(x), h(y)\rangle + \langle g(y), h(x)\rangle + \langle g(y), h(y)\rangle
\\& = \langle g(x), h(x)\rangle + \langle g(y), h(y)\rangle = f(x) + f(y).
\end{align*}
Thus $f$ is orthogonally additive. It follows from \cite[Theorem 5.4 (ii)]{I.T.Y} that
there are a unique continuous additive function $A:\mathscr{E}\to \mathscr{A}$
and a unique operator $\Phi:\langle \mathscr{E}, \mathscr{E}\rangle\to \mathscr{A}$ such that
$$f(x) = A(x) + \Phi(\langle x, x\rangle) \qquad (x\in \mathscr{E}).$$
\end{remark}

\section{Additional comments}

Let us briefly discuss some obstacles in the theory of Hilbert $C^*$-modules which
prevent a straightforward generalization of Hilbert space results on the subject of the
present paper, cf. \cite{Ch.new}, \cite[Theorem 11]{Lu}, \cite[Theorem 4]{Lu.Wo.1}. First of all,
biorthogonally closed Hilbert $C^*$-submodules very often cannot divided out as
(any kind of) direct summand of the hosting Hilbert $C^*$-module.

\begin{example}
Let us take $\mathscr A = C([0,2\pi])$, regarded as a Hilbert $C^*$-module over
itself in the natural way. Consider two multiplication operators
\[
T(x)(t) = \cos(t) \cdot x(t) \quad {\rm for} \,\, t \in [0,\pi/2] \quad {\rm and} \quad T(x)(t) = 0 \quad {\rm for} \,\, t \in [\pi/2,2\pi]
\]
\[
S(x)(t) = 0 \quad {\rm for} \,\, t \in [0,3\pi/2] \quad {\rm and} \quad S(x)(t) = \cos(t) \cdot x(t) \quad {\rm for} \,\, t \in [3\pi/2, 2\pi].
\]
Then $\langle T(x), S(y) \rangle =  \langle x,y \rangle = 0$ for every pairwise orthogonal elements $x, y \in \mathscr A$.
However, the intersection of the kernels of $T$ and $S$ is neither a direct orthogonal nor a direct topological summand of $\mathscr A$, beside it is both norm-closed and biorthogonally complemented. And the operators $T$ and $S$ are bounded and self-adjoint, and hence $\mathscr A$-linear and even positive. In fact, both they lack polar decomposition in $\mathscr A$. Similarly, both the biorthogonally complemented images of $T$ and $S$ have analogous properties like the intersection of the two kernels. Finally, there does not exist any bounded invertible module operator on the intersection of the kernels of $T$ and $S$ such that it could be continuously extended to $\mathscr A$ in such a way that the orthogonal complement of the intersection of the two kernels is contained in its kernel. This situation is different from the Hilbert space situation. For an example involving unbounded module operators we refer to \cite[Example 2.1]{AO}.
\end{example}

A second obstacle to be considered is the existence of direct topological summands in certain Hilbert $C^*$-modules which are not direct orthogonal summands.
Here non-adjointable module operators come into play.

\begin{example}
Let $\mathscr A$ be a unital $C^*$-algebra with a non-trivial norm-closed two-sided ideal $\mathscr I$.
Consider the Hilbert $\mathscr A$-module $\mathscr E = \mathscr A \oplus \mathscr I$ and its Hilbert $\mathscr A$-submodules
$\mathscr {E}_1 = \{ (i,i): i\in \mathscr I\}$ and $\mathscr {E}_2 = \mathscr A \oplus \{ 0 \}$.
Their intersection is the set $\{ 0 \}$, $\mathscr {E}_1$ is not a direct orthogonal summand, however $\mathscr {E}_1$
coincides with its biorthogonal complement with respect to $\mathscr E$. But, $\mathscr {E}_1$ is a direct topological summand of $\mathscr E$,
since $(a,i)=(i,i)+ (a-i,0)$ is the unique decomposition of any element of $\mathscr E$ into an element of $\mathscr {E}_1$
and an element of $\mathscr {E}_2$. So, there exists a bounded $\mathscr A$-linear idempotent operator
$P: \mathscr E\to \mathscr {E}_1$ such that $P$ is non-adjointable.

Similarly, $\mathscr {E}_3 = \{ (i, -i) : i\in \mathscr I\} = {\mathscr E}^\bot_1$ and $\mathscr {E}_2$ are such a pair according to the decomposition
$(a,i) = (-i, i) + (a+i,0)$ for arbitrary elements of $\mathscr E$.
So, there is a bounded, $\mathscr A$-linear, non-adjointable, idempotent operator $Q:\mathscr E\to \mathscr {E}_3$.

Consequently, $\langle P(x),Q(y) \rangle = \langle x,y \rangle = 0$ for every pairwise orthogonal elements $x,y \in \mathscr E$.
And by definition, the pair of operators $P$ and $Q$ is orthogonality-preserving, but both these operators are non-adjointable.
And their ranges are norm-closed, biorthogonally complemented Hilbert $\mathscr A$-submodules that are not orthogonal summands,
but they are topological direct summands. Moreover, $P$ and $Q$ are idempotents. Also, $PQ\not=0$ and $QP\not=0$.
Such situations cannot appear for Hilbert spaces, cf. \cite[Theorem 11]{Lu}, \cite[Theorem 4]{Lu.Wo.1}.
\end{example}

\smallskip
{\bf Acknowledgments.} We would like to thank the referees for their careful
reading of the manuscript and for their useful comments. The second author (corresponding author)
was supported by a grant from Ferdowsi University of Mashhad No. 2/53798.
\bibliographystyle{amsplain}

\begin{thebibliography}{99}

\bibitem{Ak1969} C. A. Akemann,
\textit{The general Stone--Weierstrass problem},
J. Funct. Anal. \textbf{4} (1969), 277--294.

\bibitem{A.R} Lj. Aramba\v{s}i\'{c} and R. Raji\'{c},
\textit{A strong version of the Birkhoff--James orthogonality in Hilbert $C^*$-modules},
Ann. Funct. Anal. \textbf{5} (2014), no. 1, 109--120.

\bibitem{A.R.2} Lj. Aramba{\v{s}}i\'c and R. Raji\'c,
\textit{On three concepts of orthogonality in Hilbert C*-modules},
Linear Multilinear Algebra \textbf{63} (2015), no. 7, 1485--1500.

\bibitem{AO} M. B. Asadi and F. Olyaninezhad,
\textit{Orthogonality preserving pairs of operators on Hilbert $C_0(Z)$-modules},
Linear Multilinear Algebra, doi: 10.1080/03081087.2020.1825610.

\bibitem{B.G} D. Baki\'{c} and B. Gulja\v{s},
\textit{Hilbert $C^*$-modules over $C^*$-algebras of compact operators},
Acta Sci. Math. (Szeged) \textbf{68} (2002) 249--269.

\bibitem{B.P} L. G. Brown and G. K. Pedersen,
\textit{$C^*$-algebras of real rank zero},
J. Funct. Anal. \textbf{99} (1991), 131--149.

\bibitem{Brown} L. G. Brown,
\textit{Close hereditary $C^*$-subalgebras and the structure of quasi-multipliers},
MSRI preprint {\#} 11211-85, Purdue University, West Lafayette, USA, 1985;
Proceedings of the Royal Society of Edinburgh Section A: Mathematics \textbf{147}
(2017), no. 2, 263--292.

\bibitem{C} G. Chevalier,
\textit{Wigner's theorem and its generalizations},
in: Handbook of Quantum Logic and Quantum Structures, eds. K. Engesser, D. M. Gabbay, D. Lehmann,
Elsevier, 2007, 429--475.

\bibitem{J.C.4} J. Chmieli\'{n}ski,
\textit{Linear mappings approximately preserving orthogonality},
J. Math. Anal. Appl. \textbf{304} (2005), 158--169.

\bibitem{Ch.new} J. Chmieli\'{n}ski,
\textit{Orthogonality equation with two unknown functions},
Aequationes  Math. \textbf{90} (2016), 11--23.

\bibitem{C.L.W} J. Chmieli\'{n}ski, R. {\L}ukasik, and P. W\'{o}jcik,
\textit{On the stability of the orthogonality equation and the orthogonality-preserving property with two unknown functions},
Banach J. Math. Anal. \textbf{10} (2016), no. 4, 828--847.

\bibitem{Frank:ZAA} M. Frank,
\textit{Self-duality and $C^*$-reflexivity of Hilbert $C^*$-modules},
Z. Anal. Anwend. \textbf{9} (1990), 165--176.

\bibitem{FrankPositivity} M. Frank,
\textit{Geometrical aspects of Hilbert $C^*$-modules}.
Positivity \textbf{3} (1999), no. 3, 215--243.

\bibitem{F.M.P} M. Frank, A. S. Mishchenko and A. A. Pavlov,
\textit{Orthogonality-preserving, $C^*$-conformal and conformal module mappings on Hilbert $C^*$-modules},
J. Funct. Anal. \textbf{260} (2011), 327--339.

\bibitem{I.T} D. Ili\v{s}evi\'{c} and A. Turn\v{s}ek,
\textit{Approximately orthogonality preserving mappings on $C^*$-modules},
J. Math. Anal. Appl. \textbf{341} (2008), 298--308.

\bibitem{I.T.Y} D. Ili\v{s}evi\'{c}, A. Turn\v{s}ek and D. Yang,
\textit{Orthogonally additive mappings on Hilbert modules},
Studia Math. \textbf{221} (2014), 209--229.

\bibitem{L.N.W.1} C.-W. Leung, C.-K. Ng and N.-C. Wong,
\textit{Linear orthogonality preservers of Hilbert $C^*$-modules over $C^*$-algebras with real rank zero},
Proc. Amer. Math. Soc. \textbf{140} (2012), no. 9, 3151--3160.

\bibitem{L.N.W.2} C.-W. Leung, C.-K. Ng and N.-C. Wong,
\textit{Automatic continuity and $C_0(\Omega)$-linearity of linear maps between $C_0(\Omega)$-modules},
J. Operator Theory \textbf{67} (2012), no. 1, 3--20.

\bibitem{Li} H. Li,
\textit{A Hilbert $C^*$-module admitting no frames},
Bull. London Math. Soc. \textbf{42} (2010), 388--394.

\bibitem{LT} Y. Li and D. Tan,
\textit{Wigner's theorem on the Tsirelson space $T$},
Ann. Funct. Anal. \textbf{10} (2019), no. 4, 515--524.

\bibitem{Lu} R. {\L}ukasik,
\textit{A note on the orthogonality equation with two functions},
Aequationes  Math. \textbf{90} (2016), no. 5, 961--965.

\bibitem{Lu.Wo.1} R. {\L}ukasik and P. W\'ojcik,
\textit{Decomposition of two functions in the orthogonality equation},
Aequationes  Math. \textbf{90} (2016), no. 3, 495--499.

\bibitem{Lu.Wo.2} R. {\L}ukasik and P. W\'ojcik,
\textit{Functions preserving the biadditivity},
Results Math. \textbf{75} 82 (2020).

\bibitem{M.S.P} A. Mal, D. Sain and K. Paul,
\textit{On some geometric properties of operator spaces},
Banach J. Math. Anal. \textbf{13} (2019), no. 1, 174--191.

\bibitem{Man} V. M. Manuilov and E. V. Troitsky,
\textit{Hilbert $C^*$-modules},
In: Translations of Mathematical Monographs. \textbf{226},
American Mathematical Society, Providence, RI, 2005.

\bibitem{MOL} L. Moln\'ar,
\textit{Orthogonality preserving transformations on indefinite inner product spaces: generalization of Uhlhorn's version of Wigner's theorem},
J. Funct. Anal. \textbf{194} (2002), no. 2, 248--262.

\bibitem{Mol2000} L. Moln\'ar,
\textit{Generalizations of Wigner's unitary-antiunitary theorem for indefinite inner product spaces},
Comm. Math. Phys. \textbf{201} (2000), 785--791.

\bibitem{M.Z} M. S. Moslehian and A. Zamani,
\textit{Mappings preserving approximate orthogonality in Hilbert $C^*$-modules},
Math Scand. \textbf{122} (2018), 257--276.

\bibitem{P.S.M.M} K. Paul, D. Sain, A. Mal and K. Mandal,
\textit{Orthogonality of bounded linear operators on complex Banach spaces},
Adv. Oper. Theory \textbf{3}, (2018), no. 3, 699--709.

\bibitem{Pa1973} W. L. Paschke,
\textit{Inner product modules over $B^*$-algebras},
Trans. Amer. Math. Soc. \textbf{182} (1973), 443--468.

\bibitem{Ped1979} G. K. Pedersen,
\textit{$C^*$-algebras and their automorphism groups},
Academic Press, London--New York--San Francisco, 1979.

\bibitem{RS} L. Rodman and P. \v{S}emrl,
\textit{Orthogonality preserving bijective maps on finite dimensional projective spaces over division rings},
Linear Multilinear Algebra \textbf{56} (2008), no. 6, 647--664.

\bibitem{M.M.S} M. M. Sadr,
\textit{Decomposition of functions between Banach spaces in the orthogonality equation},
Aequationes  Math. \textbf{91} (2017), no. 4, 739--743.

\bibitem{Takesaki} M. Takesaki,
\textit{Theory of Operator Algebras I},
Encyclopedia Math. Sciences v. 124, Springer, 1979 / 2002.

\bibitem{A.T.1} A. Turn\v{s}ek,
\textit{On mappings approximately preserving orthogonality},
J. Math. Anal. Appl. \textbf{336} (1) (2007), 625--631.

\bibitem{U} U. Uhlhorn,
\textit{Representation of symmetry transformations in quantum mechanics},
Ark. Fysik, \textbf{23} (1962), 307--340.

\bibitem{W} E. Wigner,
\textit{Gruppentheorie und ihre Anwendung auf die Quantenmechanik der Atomspektren},
Vieweg, Braunschweig, 1931.

\bibitem{P.W} P. W\'{o}jcik,
\textit{On certain basis connected with operator and its applications},
J. Math. Anal. Appl. \textbf{423} (2) (2015), 1320--1329.

\bibitem{W0} P. W\'ojcik,
\textit{On a functional equation characterizing linear similarities},
Aequationes  Math. \textbf{93} (2019), no. 3, 557--561.

\bibitem{Z.M.F} A. Zamani, M. S. Moslehian and M. Frank,
\textit{Angle preserving mappings},
Z. Anal. Anwend. \textbf{34} (2015), 485--500.

\end{thebibliography}

\end{document}